\newtheorem{theorem}{Theorem}
\newtheorem*{lemma}{Lemma}
\theoremstyle{definition}
\newtheorem{definition}{Definition}
\newtheorem{example}{Example}
\newtheorem{proposition}{Proposition}
\theoremstyle{remark}
\newtheorem{remark}{Remark}
\def\mes{\mathrm{mes}\,}
\def\col{\mathrm{col}\,}
\def\supp{\mathrm{supp}\,}
\def\sp{\mathrm{sp}\,}
\title{ Deterministic Diffusion}
\author{ L.~P.~Nizhnik, I.~L.~Nizhnik \\
Institute of Mathematics NAS of Ukraine, Kiev, Ukraine\\
nizhnik@imath.kiev.ua,\,irene@imath.kiev.ua}
\date{}
\begin{document}

 \maketitle

\begin{abstract}
In the present paper, we give a series of definitions and
properties of Lifting Dynamical Systems (LDS) corresponding to the
notion of deterministic diffusion. We present heuristic
explanations of the mechanism of formation of  deterministic
diffusion in LDS and the anomalous deterministic diffusion in the
case of transportation in long billiard channels with spatially
periodic structures. The expressions for the coefficient of
deterministic diffusion are obtained.
\end{abstract}

Key words: {\em  dynamical system, one-dimensional  lifting
dynamical system, deterministic diffusion, anomalous
 diffusion, diffusion coefficient, billiard channel, nonideal reflection law. }

\noindent

\section{Introduction}\label{Sec:1}

 \smallskip
One-dimensional dynamical systems on the entire axis with discrete
time are defined by the recurrence relation
\begin{equation}\label{eq:1}
x_{n+1}=f(x_{n}),
\end{equation}
where  $f(x)$ is a real function given on the entire axis and
$x_0$ is a given  initial value \cite{Ka,ShK}. Equation
(\ref{eq:1}) determines a trajectory $(x_0,x_1,...,x_n,...)=x$ in
the dynamical system (\ref{eq:1}) according to the initial value
$x_0$ and the form of the function $f.$ For the dynamical systems
admitting the chaotic behavior of trajectories, the problem of
construction of the entire trajectory or even of determination of
the values of $x_n$ for large $n$ is quite complicated because, as
a rule, the numerical calculations are performed with a certain
accuracy and the dependence of the subsequent values of $x_n$  on
the variations of the previous values is unstable. Moreover, from
the physical point of view, the initial value $x_0$ is specified
with a certain accuracy. Therefore for the investigation of the
behavior of trajectories for large values of time, we can analyze
not  the evolution of system (\ref{eq:1}) but the evolution of
measures on the axis generated by  this evolution.

If a probability measure $\mu_{0}$ (a normalized measure for which
the measure of the entire axis is equal to 1) with density
$\rho_{0}:$\,$\mu_{0}(A)=\int\limits_{A}\,\rho_{0}(x)\,dx$ is
given at the initial time, then, for a  unit of time, system
(\ref{eq:1}) maps this measure into
$\mu_{1}:$\,$\mu_{1}(A)=\mu(f^{-1}(A)),$ where  $f^{-1}(A)$ is a
complete preimage of the set  $A$ under the map $f.$ The operator
mapping the measure $\mu_{0}$ into the measure $\mu_{1}$ is called
a Perron--Frobenius operator. The Perron--Frobenius operator
$\mathcal{F}$ is linear even for nonlinear dynamical systems
(\ref{eq:1}) and maps the density $\rho_{0}$ of the initial
measure into the density  $\rho_{1}$ of the measure $\mu_{1}$ as
the integral operator with singular kernel containing a Dirac
$\delta$ function:
 \begin{equation}\label{eq:2}
\rho_{1}(x)=\mathcal{F}\rho_{0}(x)=\int\,\delta(x-f(y))
\rho_0(y)\,dy=\sum_{y_k \in f^{-1}(\{x\}) }\,
\frac{1}{|f'(y_k)|}\rho_0(y_k).
\end{equation}

The investigation of the asymptotic behavior of the density
$\rho_{n}=\mathcal{F}^{n}\rho_0$ as  $n\rightarrow\infty$ is
reduced to the investigation of the behavior of the semigroup
$\mathcal{F}^{n}.$ There are examples of dynamical systems
(\ref{eq:1}) with locally stretching maps $f$ for which the
densities $\rho_{n}$ are asymptotically Gaussian as
$n\rightarrow\infty$ independently of the choice of the density of
the initial probability measure. In this case, it is said that
deterministic diffusion occurs in the dynamical system
(\ref{eq:1}).

The aim of the present paper is to consider examples of dynamical
systems with deterministic diffusion. We restrict ourselves to the
so-called lifting dynamical systems (LDS) with piecewise linear
functions $f(x)$ in DS (\ref{eq:1}).

We briefly consider the mechanisms of  appearance of the anomalous
deterministic diffusion in the process of transportation in long
billiard channels with spatially periodic structures.

\section{Lifting Dynamical System}

Consider the dynamical system (\ref{eq:1}) on the entire axis,
where the function $f(x)$ is given on the main interval
$I_0=[-\frac{1}{2},\frac{1}{2}).$ The function $s(x)=f(x)-x$ with
$x\in I_0$ has the sense of a shift of the point $x$ under the map
$f.$ We split the entire axis $(-\infty,\infty)$ into disjoint
intervals $I_k=[k-\frac{1}{2},k+\frac{1}{2}),$ where $k\in Z$  are
integers. Assume that the function  $f$ is extended from the
interval $I_0$ onto all intervals $I_k$ so that the shift of a
point under the map $f$ on each interval $k$ is the same as on
$I_0,$ that is $s(k+x)=f(k+x)-(k+x)=s(x)=f(x)-x.$ This gives a
periodic shift function  $s(x)$ and implies the periodicity of the
function $f$ with lift 1
 \begin{equation}\label{eq:3}
f(k+x)=k+f(x),\, |x|<\frac{1}{2}, \,\, k \in Z.
\end{equation}

The dynamical system (\ref{eq:1}) with a function  $f$  satisfying
property (\ref{eq:3}) is called a LDS. This dynamical system (DS)
is well known and thoroughly investigated  \cite{CAM,Ka,Kl}.

\begin{lemma}\label{L:1}
Let a function $f$ defined on the interval
$I_0=[-\frac{1}{2},\frac{1}{2})$ be a piecewise monotone
stretching function on a finite partition $\{ I_{0,j}
\}_{j=1}^{m}$ of the interval $ I_{0}=\bigcup\limits_{j=1}^{m}
I_{0,j},$ i.e., there exists  $\lambda > 1$ such that
\begin{equation}\label{eq:4}
|f(x)-f(y)|\geq \lambda |x-y|
\end{equation}
for  $x, y \in I_{0,k},$ and, in addition, the function $f(x)$ has
finite nonzero discontinuities at the points of joint of the
intervals  $ I_{0,j}.$ Then the periodic extension (\ref{eq:3})
with lift 1 specifies a locally stretching function $f$ on the
entire axis.
\end{lemma}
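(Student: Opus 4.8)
The plan is to produce, on the entire axis, a locally finite partition into intervals on whose cells $f$ is continuous, monotone, and satisfies the stretching estimate (\ref{eq:4}); this is what is meant by $f$ being locally stretching. First I would note that the periodic extension (\ref{eq:3}) really does define a finite real function on the whole axis: on $I_0$ the map $f$ is monotone on each of the finitely many cells $I_{0,j}$ and has only finite one-sided limits at the joints and at the endpoints of $I_0$, hence $f|_{I_0}$ is bounded; by (\ref{eq:3}) the values of $f$ on $I_k$ lie in $k+f(I_0)$, which is bounded, so $f$ is everywhere finite. (This is the only place where the assumption that the discontinuities are \emph{finite} is used.)

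Next I would set $I_{k,j}:=k+I_{0,j}$ for $k\in Z$ and $1\le j\le m$. These sets are intervals, they are pairwise disjoint, their union is the whole axis, and the resulting partition is locally finite — each $I_k$ contains precisely the $m$ cells $I_{k,1},\dots,I_{k,m}$. The stretching estimate on a single cell is inherited from (\ref{eq:4}) through the lift relation: for $u,v\in I_{k,j}$ write $u=k+x$, $v=k+y$ with $x,y\in I_{0,j}$; then (\ref{eq:3}) gives $f(u)-f(v)=(k+f(x))-(k+f(y))=f(x)-f(y)$, so that
\[
|f(u)-f(v)|=|f(x)-f(y)|\ \ge\ \lambda\,|x-y|\ =\ \lambda\,|u-v| ,
\]
i.e.\ (\ref{eq:4}) holds on every cell $I_{k,j}$ with the \emph{same} constant $\lambda>1$. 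Moreover $f|_{I_{k,j}}$ is just the translate (by $k$ in argument and in value) of $f|_{I_{0,j}}$, hence continuous and monotone on $I_{k,j}$, and, since $\lambda>1$, injective there; and at each interior joint $k+a_j$ (where $a_1<\dots<a_{m-1}$ are the joints of the partition of $I_0$) the extended $f$ retains the finite nonzero jump that $f$ has at the corresponding joint of $\{I_{0,j}\}$. This is exactly the data certifying that $f$ is locally stretching on the entire axis, which finishes the argument.

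The only genuine subtlety — the step that needs care — is the behaviour near the cell boundaries, i.e.\ making sure the partition $\{I_{k,j}\}$ really witnesses local stretching and not merely stretching in the interior of each branch. Here two points must be checked. At an interior joint $k+a_j$ the left and right one-sided limits of $f$ differ by a fixed nonzero amount, so for $y$ on either side close enough to $k+a_j$ one has $|f(k+a_j)-f(y)|$ bounded away from $0$ while $|k+a_j-y|\to 0$, and the stretching inequality is then trivially satisfied; thus the \emph{nonvanishing} of the jumps guarantees that no two adjacent cells could ever be merged into a single interval of continuity, which is what makes $\{I_{k,j}\}$ a bona fide branch partition. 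At a gluing point $k\pm\frac12$ the extended $f$ may or may not be continuous, but that point is in any case a common endpoint of two cells of $\{I_{k,j}\}$ on each of which (\ref{eq:4}) already holds, so again nothing new has to be proved. Beyond these observations the lemma is essentially bookkeeping: the lift relation (\ref{eq:3}) transports the cellwise bound (\ref{eq:4}) verbatim to every translated cell, and the translated partition stays locally finite.
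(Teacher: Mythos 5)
There is a genuine gap, and it comes from how you read ``locally stretching.'' In the paper this is a \emph{uniform two-point} property: there exists a single $\varepsilon>0$ such that $|f(x)-f(y)|\geq \lambda|x-y|$ holds for \emph{every} pair $x,y$ on the axis with $|x-y|<\varepsilon$, including pairs lying on opposite sides of a discontinuity. You instead take it to mean that the translated partition $\{I_{k,j}\}$ is a branch partition on each cell of which (\ref{eq:4}) holds; under that reading the lemma reduces to the translation identity $f(k+x)-f(k+y)=f(x)-f(y)$, which is the only part you actually prove (and which is also the first, easy half of the paper's proof). The substance of the lemma --- and the entire second half of the paper's argument --- is the cross-cell case: for two points $x,y$ in \emph{adjacent} cells one uses monotonicity to choose $\varepsilon_0>0$ so that any pair at distance $<\varepsilon_0$ lies in the same or in neighboring cells, uses the fact that the finitely many jumps per period have magnitudes bounded below by some $a>0$ to get $|f(x)-f(y)|$ bounded below by a fixed positive constant for such straddling pairs, and then shrinks $\varepsilon$ to $\min(\varepsilon_0,\tfrac{a}{3\lambda})$ so that this constant exceeds $\lambda|x-y|$. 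None of this appears in your write-up.

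Your ``subtlety'' paragraph gestures at the joints but does not close the gap: you only consider the pair consisting of the joint $k+a_j$ itself and a single nearby point $y$ (and even there the claim that $|f(k+a_j)-f(y)|$ is bounded away from $0$ for $y$ on \emph{either} side is false on the side whose cell contains $k+a_j$, where $f$ is continuous), whereas the required estimate is for arbitrary pairs $x,y$ with $x$ and $y$ on opposite sides of the joint and both distinct from it. Moreover, you convert the hypothesis that the jumps are nonzero into a remark that adjacent cells cannot be merged, rather than using it for what the paper needs it for: producing the uniform lower bound on $|f(x)-f(y)|$ across a joint that makes the stretching inequality hold there. To repair the proof, add the step: let $a>0$ be the minimum of the jump magnitudes at the joints of $\{I_{0,j}\}$ (finitely many per period, hence $a>0$); show via the monotonicity of each branch that $|f(x)-f(y)|$ is bounded below by a positive multiple of $a$ whenever $x,y$ lie in neighboring cells sufficiently close to the common joint; and then choose $\varepsilon$ small enough that $\lambda\varepsilon$ does not exceed this bound.
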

 \medbreak
\begin{proof}
If the points $x$ and $y$ belong to the same interval  $I_{0,k},$
then, by virtue of (\ref{eq:4}),
$$
|f(x)-f(y)|=|f(x-k)-f(y-k)|\geq \lambda |x-y|.
$$

If the points $x$ and $y$ belong to neighboring intervals, then
the function has a discontinuity at the point of joint of these
intervals and the value of this discontinuity is not smaller than
a certain $a > 0.$ In view of the monotonicity of the function
$f,$ there exists $\varepsilon_{0}>0$ such that, for
$|x-y|<\varepsilon_{0},$ the points $x$ and $y$ belong either to
the same interval $I_{0,k}$ or to neighboring intervals. Hence,
$$
|f(x)-f(y)|\geq\frac{a}{3}\geq \lambda \varepsilon,
$$
where $\varepsilon=\min (\varepsilon_{0},\frac{a}{3\lambda}).$
Thus, for $|x-y|<\varepsilon,$ we have $|f(x)-f(y)|\geq \lambda
|x-y|.$
\end{proof}

Let $x=(x_0,x_1,...,x_n,...)$ be a trajectory for the LDS
(\ref{eq:1})--(\ref{eq:3}). By $k,$ we denote the number of the
interval $I_k=[k-\frac{1}{2},k+\frac{1}{2})$ containing a number
$a \in I_k$ as follows: $k=[a)$ is the nearest integer for the
number  $a.$ Then the trajectory $x$ is associated with an
integer-valued sequence:
$M=([x_0),[x_1),...,[x_n),...)=(m_1,m_2,...,m_n,...)$  of the
numbers of intervals containing the values $x_n.$ The sequence $M$
is called the route of the trajectory $x$ and contains the numbers
of intervals successively ``visited'' by the phase point in the
LDS \cite{Ka}.

\begin{proposition}\label{p:1}
Let the function $f$ determining the LDS
(\ref{eq:1})--(\ref{eq:3}) be stretching. Then the trajectory of
the LDS is uniquely determined by its route.
\end{proposition}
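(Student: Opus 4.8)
The plan is to argue by contradiction, exploiting the expansivity built into the hypothesis against the fact that a common route confines two orbits to a common unit interval. Suppose $x=(x_0,x_1,\dots)$ and $y=(y_0,y_1,\dots)$ are two trajectories of the LDS with the same route $M=(m_1,m_2,\dots)$. By the definition of the route, $[x_n)=[y_n)=m_{n+1}$ for every $n\ge 0$, so $x_n$ and $y_n$ lie in the same unit interval $I_{m_{n+1}}$, whose length is $1$; hence $|x_n-y_n|<1$ for all $n$. I want to show that this uniform bound is incompatible with $x_0\ne y_0$, because stretching forces the gap $|x_n-y_n|$ to grow geometrically in $n$. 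Note that the statement only asserts uniqueness (injectivity of $\text{trajectory}\mapsto\text{route}$), so existence of a trajectory realizing a given route need not be addressed.

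The steps, in order, would be: (i) observe that a common route keeps both orbits in a common unit interval at every time, so $|x_n-y_n|<1$ for all $n$; (ii) show that the stretching estimate applies to the pair $(x_n,y_n)$ at each step, i.e. $|x_{n+1}-y_{n+1}|=|f(x_n)-f(y_n)|\ge\lambda|x_n-y_n|$ with the $\lambda>1$ of (\ref{eq:4}) / Lemma~\ref{L:1}; (iii) iterate (ii) to get $|x_n-y_n|\ge\lambda^{\,n}\,|x_0-y_0|$; (iv) combine with (i): if $x_0\ne y_0$, then $\lambda^{\,n}|x_0-y_0|\to\infty$ since $\lambda>1$, contradicting $|x_n-y_n|<1$, so $x_0=y_0$; (v) conclude $x_n=f^{n}(x_0)=f^{n}(y_0)=y_n$ for all $n$, i.e. $x=y$.

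The main obstacle is step (ii). The stretching in Lemma~\ref{L:1} is only a \emph{local} estimate — it yields $|f(x)-f(y)|\ge\lambda|x-y|$ for $x,y$ in a common monotonicity piece $I_{0,j}$, or for $x,y$ within some $\varepsilon$ — whereas a common route only guarantees that $x_n$ and $y_n$ lie in the same unit interval $I_{m_{n+1}}$, which need not put them in a common monotonicity piece, and a jump discontinuity between them does not by itself provide a bound of the form $\lambda|x_n-y_n|$. So I expect step (ii) to rest either on reading ``the function $f$ determining the LDS is stretching'' as the uniform estimate $|f(x)-f(y)|\ge\lambda|x-y|$ for \emph{all} $x,y$ in the same $I_k$ — under which (ii) is immediate — or on an additional structural property of $f$ ensuring that a common route propagates enough information (for instance, that the branch of $f$ on $I_{m_{n+1}}$ whose image meets $I_{m_{n+2}}$ is unique), so that (\ref{eq:4}) can be applied branch by branch. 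Pinning down precisely which form of ``stretching'' is in force is the crux; once (ii) is secured, steps (iii)–(v) are routine.
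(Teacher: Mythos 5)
Your argument is exactly the paper's proof: the authors likewise take two trajectories with a common route, apply the stretching inequality $|x_{n+1}^{(1)}-x_{n+1}^{(2)}|\geq\lambda|x_n^{(1)}-x_n^{(2)}|$ at every step, combine the resulting geometric growth with the bound $|x_n^{(1)}-x_n^{(2)}|\leq 1$ coming from both points lying in $I_{m_n}$, and conclude $|x_0^{(1)}-x_0^{(2)}|\leq\lambda^{-n}$ for all $n$, forcing $x_0^{(1)}=x_0^{(2)}$. The worry you raise about step (ii) is legitimate but is not resolved in the paper either: the authors simply invoke ``the stretching property of the map $f$'' for two arbitrary points of the same unit interval, i.e., they read the hypothesis in the first of the two ways you propose (a uniform estimate valid throughout each $I_k$), not as the merely local estimate of Lemma~\ref{L:1}.
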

\begin{proof}

Assume that two initial conditions $x_0^{(1)}$ and $x_0^{(2)}$
generate trajectories of the LDS with the same route. This means
that the points $x_n^{(1)}$ and $x_n^{(2)}$ lie in the same
interval $I_{m_n}.$ In view of the stretching property of the map $f,$
the inequality $|x_{n+1}^{(1)}-x_{n+}^{(2)}|\geq
\lambda|x_n^{(1)}-x_n^{(2)}|,$\, $\lambda > 1,$ is true. Since any
two points from the same interval $I_{m_n}$ differ by at most 1,
the successive application of this inequality yields the
inequality $|x_0^{(1)}-x_0^{(2)}|\leq \frac{1}{\lambda^{n}}$ for
any $n.$ Since $\lambda >  1,$ this implies that
$x_0^{(1)}=x_0^{(2)}.$
\end{proof}

\begin{definition}\label{D:1}
If any integer-valued sequence in the LDS is a route of a certain
trajectory and the trajectory is uniquely determined by its route,
then we say that the LDS possesses the Bernoulli property
\cite{Ka}.
\end{definition}

\begin{proposition}\label{p:2}
If a function $f$ defined in the interval
$I_0=[-\frac{1}{2},\frac{1}{2})$ is odd, continuous, monotonically
increasing, stretching, and unbounded, then the LDS
(\ref{eq:1})--(\ref{eq:3}) corresponding to this function $f$
possesses the Bernoulli property.
\end{proposition}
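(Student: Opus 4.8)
The plan is to verify the two requirements in Definition~\ref{D:1} separately: every integer-valued sequence is realized as a route, and the route determines the trajectory. The second requirement is immediate from Proposition~\ref{p:1}, since $f$ is assumed stretching, so the entire content lies in showing surjectivity of the route map. So fix an arbitrary integer sequence $M=(m_1,m_2,\dots)$; I want to produce $x_0$ with $[x_n)=m_n$ for all $n$, i.e.\ $f^n(x_0)\in I_{m_n}$ for every $n\geq 0$ (with the convention matching the indexing in the excerpt).

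First I would record the structural facts about $f$ coming from the hypotheses. Since $f$ is odd, continuous, monotonically increasing and unbounded on $I_0=[-\tfrac12,\tfrac12)$, and since the lift-$1$ periodic extension~(\ref{eq:3}) glues copies together, the extended $f$ is a continuous, strictly increasing bijection of $\mathbb{R}$ onto $\mathbb{R}$ (oddness gives $f(0)=0$ and makes the one-sided limits at the endpoints $\pm\tfrac12$ match up so that no jump is introduced by the extension; unboundedness of $f$ on $I_0$ is what forces each branch, hence $f$ itself, to be onto). In particular $f$ has a continuous strictly increasing inverse $g=f^{-1}:\mathbb{R}\to\mathbb{R}$, and the stretching bound~(\ref{eq:4}) transfers to a contraction bound $|g(u)-g(v)|\leq \lambda^{-1}|u-v|$ for $u,v$ in the image of any monotonicity interval — and, because $g$ is a genuine function on all of $\mathbb{R}$, in fact $|g(u)-g(v)|\le \lambda^{-1}|u-v|$ for all $u,v$.

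Next I would build the preimage sets by backward iteration. Define $J_0 = I_{m_0}$ (a closed-open interval of length $1$) and inductively $J_{n+1} = J_n \cap f^{-1}(J_{n+1}')$ where $J_{n+1}' = I_{m_{n+1}}$; equivalently $J_n = \{x : f^k(x)\in I_{m_k},\ 0\le k\le n\}$. The key claim is that each $J_n$ is a nonempty interval: nonemptiness follows because $f^n$ maps $\overline{J_{n-1}}$ onto an interval containing $I_{m_n}$ in its interior (here surjectivity of $f$, hence of $f^n$, on all of $\mathbb{R}$ is used, so $I_{m_n}$ is certainly hit), and it is an interval because $f^n$ is continuous and monotone on $\overline{J_{n-1}}$ so the preimage of an interval under it is an interval. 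Using the contraction estimate for $g=f^{-1}$ one gets $\operatorname{diam}(J_{n}) \le \lambda^{-n}$, so the closures $\overline{J_n}$ form a nested sequence of nonempty compact intervals with diameters tending to $0$; by Cantor's intersection theorem $\bigcap_n \overline{J_n}$ is a single point $x_0$. Then $f^n(x_0)\in \overline{I_{m_n}}$ for all $n$; a short argument (using that the only way to land on the right endpoint $m_n+\tfrac12$ is excluded for all but possibly one coordinate, and that one can perturb $x_0$ slightly inside the still-nonempty relatively open preimage of the half-open intervals, or simply note $I_k$ is half-open and the shrinking sets were defined with the half-open convention) shows $f^n(x_0)\in I_{m_n}$, i.e.\ the route of $x_0$ is exactly $M$. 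Combined with Proposition~\ref{p:1}, this gives the Bernoulli property.

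I expect the main obstacle to be the bookkeeping at the endpoints: because the intervals $I_k=[k-\tfrac12,k+\tfrac12)$ are half-open and $f$ is defined on $I_0=[-\tfrac12,\tfrac12)$ (so $f$ has a jump at the joins and is only right-continuous at $-\tfrac12$), the naive nested-closed-interval argument produces a point in the closed interval $\overline{I_{m_n}}$ and one must argue the limit point does not sit on the forbidden right endpoints. The clean way around this is to do the construction with half-open interiors throughout — each $J_n$ is half-open of the form $[a_n,b_n)$, the $a_n$ increase, the $b_n$ decrease, $b_n-a_n\le\lambda^{-n}$, so $x_0=\lim a_n=\lim b_n$ lies in every $J_n$ — using continuity/monotonicity of each branch together with surjectivity of $f$ on $\mathbb{R}$ (from unboundedness) to guarantee $[a_n,b_n)$ is always nonempty. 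Everything else is a routine application of the contraction mapping estimate inherited from~(\ref{eq:4}).
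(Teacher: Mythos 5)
Your construction is essentially the paper's own proof: uniqueness from Proposition~\ref{p:1}, and existence via nested preimage intervals whose lengths shrink like $\lambda^{-n}$ by the stretching property, with the route point obtained as the intersection. (The paper even builds the same sets, indexed backward as $U^{(k)}$, and is less careful than you are about the half-open endpoints.)

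One claim in your write-up is false as stated and should be repaired, although the repair is easy and does not change the argument. You assert that the lift-$1$ extension of $f$ is a continuous, strictly increasing bijection of $\mathbb{R}$ onto $\mathbb{R}$ with a globally defined contracting inverse $g$. That cannot be: since $f$ is unbounded and increasing on $I_0$, each branch $f|_{I_k}$ already maps onto all of $\mathbb{R}$ (with $f\to\pm\infty$ at the two ends of $I_k$), so the extended map has infinite jumps at the half-integers and is countably-many-to-one rather than injective; there is no single-valued $g=f^{-1}$ on $\mathbb{R}$. What your construction actually uses — and what is true — is the branch-wise version: for each $k$ the restriction $f|_{I_k}$ is a monotone surjection onto $\mathbb{R}$, so the preimage of $I_{m_{n}}$ \emph{inside} $I_{m_{n-1}}$ is a nonempty interval, and the inverse of that single branch contracts by $\lambda^{-1}$ by~(\ref{eq:4}). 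Phrased that way (exactly as in the paper, ``the preimage of $I_{m_n}$ under $f$ lying on $I_{m_{n-1}}$''), your nonemptiness and diameter estimates go through unchanged, and the rest of your argument, including the endpoint bookkeeping, is fine.
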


\begin{proof}
The uniqueness of reconstruction of a trajectory according to its
route is proved in Appendix~\ref{p:1}. Let
$M=(m_1,m_2,...,m_n,...)$ be an arbitrary integer-valued sequence.
In the interval $I_{m_1},$ we consider a sequence of embedded
contracting closed intervals  $U_n$ constructed as follows:

Let $U^{(1)}$ be the closure of the preimage of the interval
$I_{m_n}$ under the map $f$ lying on $I_{m_{n-1}}$ :
$U^{(1)}=f^{-1}(I_{m_n}).$ Since the  map $f$ is stretching, the
length $\mes U^{(1)} \leq \lambda^{-1}.$ Let $U^{(2)}$ be the
preimage of $U^{(1)}$ in the interval $I_{m_{n-2}}$ and let, by
induction, $U^{(k)}$ be the preimage of $U^{(k-1)}$ in the
interval $I_{m_{n-k}}.$ The sets $U_n=U^{(n)}\subset I_{m_1}$ and
$\mes U_n \leq \lambda^{n}.$ As $n\rightarrow \infty,$  we get a
system of embedded closed sets $U_{n+1}\subset U_n$ in the
interval $I_{m_{k}}$  and  $\mes U_n \rightarrow 0$ as
$n\rightarrow \infty.$ The common limit point $x_0$ for all $U_n$
lies in $I_{m_1}$ and its trajectory
$x=(x_0,f(x_0),...,f^{n}(x_0),...)$ has the route $M.$
\end{proof}

\noindent  \section{Markov Partition of the Phase Space of LDS.
Piecewise Linear LDS}

The system of intervals
$I_k=[k-\frac{1}{2},k+\frac{1}{2}),$\,$k\in Z,$ for  the LDS
(\ref{eq:1})--(\ref{eq:3}) can be regarded as a Markov partition
of the phase space \cite{Ka}. Consider a finer Markov
subpartition. Assume that the main interval $I_0$ is split into
finitely many $m$  subintervals $I_{0,j}=[x_j, x_{j+1}),$ where
$x_0=-\frac{1}{2},$\,$x_j
<x_{j+1},$\,$x_{m}=\frac{1}{2},$\,$j=0,1,2,...,m,$ and $I_{0,j}=[
x_{j-1},x_j).$ An integer-valued shift of this partition leads to
the decomposition of the intervals $I_{k}$ into subintervals
$I_{k,j}=[k+x_{j-1}, k+x_{j}).$ Hence, the entire axis is split
into intervals  $\{I_{k,j}\},$\, $k\in Z,$ \, $j= 1, 2,...,m.$

\begin{definition}\label{D:2}
We say that the Markov partition $\{I_{k,j}\}_{k\in Z,1\leq j \leq
m}$ of the entire axis is consistent with the LDS
(\ref{eq:1})--(\ref{eq:3}) if the LDS maps, for a unit of time, any probability measure
with constant densities in each set $I_{k,j}$ into a measure with
constant densities in each interval $I_{k,j}$.
\end{definition}

\begin{definition}\label{D:3}
We say that a function $f$ specifying the LDS
(\ref{eq:1})--(\ref{eq:3}) is consistent with the Markov partition
$\{I_{k,j}\}_{k\in Z,1 \leq j \leq m}$ defined with the help of
the numbers $-\frac{1}{2}=x_0,x_1,...,x_{m}=\frac{1}{2}$ if it is
linear and nonconstant on each interval $I_{k,j}$ and, at each end
of the interval $I_{k,j},$ takes values equal to an integer plus
one of the numbers $x_j,$\, $j=0,1,...,m.$
\end{definition}

\begin{example}\label{Ex:1}
Let $f(x)=\Lambda x$ be a linear function in the interval
$I_0=[-\frac{1}{2},\frac{1}{2}),$ where $\Lambda=2l+1$ is an odd
number. Then this function is consistent with the Markov partition
$\{I_{k}\}_{k\in Z},$\,$I_k=[k-\frac{1}{2},k+\frac{1}{2}).$
\end{example}

\begin{example}\label{Ex:2}
Let  $f(x)=\Lambda x,$ where $\Lambda=2l$ is an even number. Then
this function is consistent with the Markov partition
$\{I_{k,\pm}\}_{k\in Z},$ where $I_{k,+}=[k,k+\frac{1}{2})$ and
$I_{k,-}=[k-\frac{1}{2},k).$
\end{example}

An important example of the function $f$ satisfying
Definition~\ref{D:3} is given by the following assertion:

 \begin{proposition}\label{p:3}
Assume that a function  $f(x)$ determining the LDS in the interval
$I_0$ is piecewise linear, takes half-integer values at the ends
of each its linear pieces, and moreover, $f'(x)\neq 0$ almost
everywhere. Then, by Definition~\ref{D:3}, the function $f$ is
consistent with the Markov partition $\{I_k\}|_{k \in Z}.$
\end{proposition}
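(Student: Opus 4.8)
The plan is to verify the three conditions of Definition~\ref{D:3} for the partition $\{I_k\}_{k\in Z}$, i.e.\ for the choice $m=1$, $x_0=-\frac12$, $x_1=\frac12$. In this case the numbers $x_j$ available at endpoints are exactly $-\frac12$ and $\frac12$, so ``an integer plus one of the numbers $x_j$'' means precisely a half-integer. Thus the content of the proposition is: take the given piecewise-linear $f$ on $I_0$, record the finitely many breakpoints $-\frac12=t_0<t_1<\cdots<t_r=\frac12$ at which the slope of $f$ changes, and show that $f$ restricted to each sub-subinterval is linear, nonconstant, and sends the endpoints of that subinterval to half-integers.

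First I would set up the refined partition. The natural candidate is to subdivide $I_0$ at the breakpoints $t_0,\dots,t_r$ of the piecewise-linear function, so the new partition points are $-\frac12=t_0,t_1,\dots,t_r=\frac12$; then $I_0=\bigcup_{i} [t_{i-1},t_i)$ and the integer-shifted copies $[k+t_{i-1},k+t_i)$ tile the axis. On each $[t_{i-1},t_i]$ the function $f$ is affine by hypothesis. Second, I would check nonconstancy: this is exactly the assumption $f'(x)\neq0$ almost everywhere, since on a linear piece $f'$ is a constant, and a constant that is zero a.e.\ on an interval of positive length is identically zero there, contradicting the hypothesis; hence the slope on each piece is nonzero, so $f$ is nonconstant on each piece. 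Third, I would check the endpoint condition: by hypothesis $f$ takes half-integer values at the ends of each linear piece, i.e.\ $f(t_i)\in \frac12+\Z$ for every $i$; by the lift periodicity (\ref{eq:3}) the same holds at $k+t_i$ for every $k\in Z$, since $f(k+t_i)=k+f(t_i)$ is again a half-integer. This is precisely the requirement in Definition~\ref{D:3} that at each endpoint $f$ equals an integer plus one of the numbers $x_j\in\{-\frac12,\frac12\}$.

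Finally I would assemble these observations: the partition $\{I_k\}$ coincides (up to the extra breakpoints, which only make it finer in the sense of Definition~\ref{D:3}, but we may also just take the partition to be generated by the breakpoints) with the Markov subpartition determined by $x_0=-\frac12,x_1=\frac12$, and $f$ is linear, nonconstant, and half-integer-valued at all endpoints, so by Definition~\ref{D:3} it is consistent with $\{I_k\}$. I do not expect any genuine obstacle here; the only point requiring a word of care is the logical step turning ``$f'\neq0$ a.e.'' into ``the slope of every linear piece is nonzero,'' which follows because each piece has positive length and $f'$ is constant on it. One should also remark that the conclusion is stated for $\{I_k\}$ itself rather than for the finer partition at the breakpoints; if one insists literally on the coarse partition $\{I_k\}$, then implicitly $f$ must already be linear on each whole interval $I_k$ (so $r=1$), and the argument is the same but shorter.
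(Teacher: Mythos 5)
Your proof is correct and follows essentially the same route as the paper's: the paper's one-line proof likewise consists of verifying the conditions of Definition~\ref{D:3} at the points where $f$ takes half-integer values, which for the partition $\{I_k\}_{k\in Z}$ (so the admissible endpoint values are an integer plus $\pm\frac12$, i.e.\ half-integers) is exactly the hypothesis. Your additional observations --- that nonconstancy of each linear piece follows from $f'\neq 0$ a.e.\ together with the positive length of each piece, and that one implicitly refines $\{I_k\}$ at the breakpoints of $f$ when applying Definition~\ref{D:3} --- only make explicit what the paper leaves tacit.
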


\begin{proof}
The proof follows from the verification of the conditions of
Definition~\ref{D:3} for points $\{x_j\},$ that is for  all points
at which the function $f$ takes half-integer values.
\end{proof}

A piecewise linear function $f$ from Proposition~\ref{p:3} will be
called a piecewise linear function  taking half-integer values at
the ends of the linear pieces.

The equivalence of Definitions~\ref{D:2} and \ref{D:3} for LDS
yields the following statement:

\begin{proposition}\label{p:4}
For the Markov partition $\{I_{n,j}\}_{n\in Z,|j|\leq m}$ to be
consistent with the action of the LDS (\ref{eq:1})--(\ref{eq:3})
by Definition~\ref{D:2}, it is necessary and sufficient that the
function $f$ determining the LDS (\ref{eq:1})--(\ref{eq:3}) be
consistent with this Markov partition by Definition~\ref{D:3}.
\end{proposition}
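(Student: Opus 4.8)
The plan is to prove Proposition~\ref{p:4} by establishing the equivalence of the two consistency notions directly, splitting into the necessity and sufficiency directions. The key objects are: a probability measure $\mu$ that is constant on each cell $I_{k,j}$, its image $\mathcal{F}\mu$ under the Perron--Frobenius operator, and the requirement that $\mathcal{F}\mu$ again be constant on each $I_{k,j}$. By periodicity (\ref{eq:3}) it suffices to analyze what happens over the base interval $I_0$, i.e. how densities are assembled on each $I_{0,j}$ from the preimages under $f$.

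For the \textbf{sufficiency} direction ($\ref{D:3}\Rightarrow\ref{D:2}$): assume $f$ is linear and nonconstant on each $I_{k,j}$ and maps each endpoint of $I_{k,j}$ to an integer plus some $x_i$. Then on each piece $I_{k,j}$ the map $f$ is an affine bijection onto a union of whole cells $I_{k',j'}$ — this is the crucial consequence of the endpoint condition, since the image is an interval whose endpoints are grid points of the partition, hence a union of cells. Using formula (\ref{eq:2}), the image density at a point $x\in I_{k',j'}$ is $\sum_{y_i\in f^{-1}(x)} \rho_0(y_i)/|f'(y_i)|$; each term is constant because $\rho_0$ is constant on the cell containing $y_i$ and $|f'(y_i)|$ is the (constant) slope of the corresponding linear piece, and the set of preimage branches contributing to $I_{k',j'}$ does not change as $x$ ranges over $I_{k',j'}$ (again by the endpoint/grid condition). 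Hence $\mathcal{F}\rho_0$ is constant on each cell, which is Definition~\ref{D:2}.

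For the \textbf{necessity} direction ($\ref{D:2}\Rightarrow\ref{D:3}$): assume the partition is consistent with the LDS action but $f$ violates Definition~\ref{D:3}; we derive a contradiction by exhibiting a cell-constant measure whose image is not cell-constant. There are two ways to fail: either $f$ is not affine on some cell $I_{0,j}$, or it is affine but some endpoint of $I_{0,j}$ is mapped to a non-grid value. In the first case, feeding in the uniform density on all of $I_0$ (extended periodically), the contribution of the nonlinear branch to the image density varies within its image interval (since $1/|f'|$ is nonconstant there), and one checks this variation cannot be cancelled by other branches for a suitably localized test measure — here one should use that we are free to choose the measure, e.g. put a spike $\rho_0 = $ const on a single source cell and $0$ elsewhere, isolating one branch. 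In the second case, if an endpoint maps to $k'+c$ with $c\notin\{x_0,\dots,x_m\}$, then a cell-constant measure supported on that branch produces an image density with a jump at the non-grid point $k'+c$, hence not constant on the cell straddling it. Assembling these, Definition~\ref{D:2} forces both linearity on cells and the endpoint-to-grid condition, i.e. Definition~\ref{D:3}.

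The main obstacle is the necessity direction, specifically the cancellation issue: a priori one nonlinear or misaligned branch's bad behavior could be masked by other branches stacking on the same target cell. The clean way around this is to exploit the full strength of Definition~\ref{D:2}, which quantifies over \emph{all} cell-constant probability measures, not just the uniform one — by choosing a measure concentrated (with constant density) on a single source cell $I_{0,j}$ and vanishing elsewhere, the image density over any target cell receives a contribution only from the at-most-one branch of $f\restriction I_{0,j}$ covering it, eliminating cancellation entirely and forcing that single branch to be affine with grid-aligned image endpoints. I would present the argument in this reduced form. Everything else — the periodicity reduction and the computation with (\ref{eq:2}) — is routine once this point is set up. One should also note that the earlier Propositions~\ref{p:3} and the equivalence remark preceding the statement signal that the intended proof is essentially this unwinding of the two definitions through the Perron--Frobenius formula.
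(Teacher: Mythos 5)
Your proposal is correct and follows essentially the same route as the paper: for sufficiency, reduce by linearity of the Perron--Frobenius operator to a measure constant on a single cell and observe that the affine, grid-aligned branch maps it onto a union of cells with constant density; for necessity, test with a measure supported on one source cell, note that the image support must be a union of whole cells (forcing grid-aligned endpoints) and that $\rho_1=\rho_0/|f'|$ on each preimage forces $f'$ constant there. Your explicit discussion of the cancellation issue is a sensible elaboration of what the paper handles implicitly by the same single-cell localization.
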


\begin{proof}
We now show that if $f$ satisfies the conditions of
Definition~\ref{D:3}, then the LDS transforms the probability
measure with constant densities on $I_{n,j}$ into a measure with
constant densities on $I_{n,k},$ i.e., the LDS satisfies the
conditions of Definition~\ref{D:2}. Since operator (\ref{eq:2}) is
linear, it suffices to consider the case where the density of the
initial measure is constant on the fixed interval
$I_{n_{0},k_{0}}.$ On this interval, the function $f(x)$ is linear
and nonconstant and maps $I_{n_{0},k_{0}}$ into the union $\bigcup
I_{n,k}$ of several neighboring intervals. Therefore, the inverse
map of these intervals is linear and, hence, the measure has a
constant density in each of these intervals.

Let the conditions of Definition~\ref{D:2} be satisfied. If the
density of the initial measure is constant in the interval
$I_{n_{0},k_{0}},$ then the map $f$ gives a new measure  $\mu_1$
on the entire axis. It is clear that $\supp \mu$ coincides with
the closure of a certain union $f^{-1}(I_{n,k})=\bigcup I_{0,j}$
because, otherwise, there exist an interval $I_{\bar{n},\bar{k}}$
and its part  $A$  such that
$\mu_1(I_{\bar{n},\bar{k}})=\mu_1(A)\neq 0$  and $\mu_1(A)=0.$
This contradicts the condition of Definition~\ref{D:2} according
to which the measure $\mu_1$ has a constant density. Let
$I_{\bar{n},\bar{k}}$ be one of the intervals $f(I_{n_{0},k_{0}})$
and let $B=f^{-1}(I_{\bar{n},\bar{k}}).$ Since $B\subset
I_{n_{0},k_{0}},$ the initial measure on $B$ has a constant
density, $f$ bijectively maps $B$ onto $I_{\bar{n},\bar{k}},$ and,
according to the Perron--Frobenius operator, the density
$\rho_1(x)$ of the measure  $\mu_1$ on $I_{\bar{n},\bar{k}}$ is
expressed via the density $\rho_0$ of the initial measure on
$I_{n_{0},k_{0}}$ by the equality
$\rho_1(x)=\frac{\rho_0(x)}{|f'(x)|}.$ Thus, we arrive at the
conclusion that $f'(x)\equiv const$ on the interval $B$ and that
the function maps the ends of the interval $B$ into the ends of
the interval $I_{\bar{n},\bar{k}}.$ Thus, the function $f$
satisfies the conditions of Definition~\ref{D:3}.
\end{proof}

The following question arises: Is it possible to construct Markov
partitions of the entire axis consistent with the linear function
$f(x)=\Lambda x$ for the values of $\Lambda$ other than in
Examples~\ref{Ex:1} and \ref{Ex:2}?

Since the linear function $f(x)=\Lambda x$ is odd, the numbers
$x_{j}$ specifying a consistent Markov subpartition of the
interval $I_0$ are symmetric about the middle of the interval
$I_0.$ Hence, it is sufficient to define solely the positive
values of $x_{j}$ and indicate that  the number $m$ of
subintervals $\{I_{0,j}\}_{1\leq j \leq m}$ is even or odd,
because $s=0$ belongs to the set $\{s_{j}\}$ for even $m$ and does
not belong to this set for odd $m.$

We enumerate the ends of the intervals $\{I_{k,j}\}$ located on
the positive half axis in the order of increase starting from
the first positive number. This yields a sequence $0
<s_{0}<s_{1}<s_{2}<...<s_{k}<....$ Since the Markov subpartition
$\{I_{k,j}\}$ is consistent with the LDS with the linear function
$f(x)=\Lambda x,$ we get
\begin{equation}\label{eq:3.1}
\Lambda s_{1}= s_{1+n_1},\, \Lambda s_{2}=\Lambda
s_{1+n_1+n_2},...,\Lambda
s_{\hat{m}}=s_{1+n_1+n_2+...+n_{\hat{m}}}.
\end{equation}
Here, $\hat{m}$ is expressed via $m$ of the form $m=2\hat{m}$ for
even $m$ and of the form $m=2\hat{m}-1$ for odd $m.$ Hence, each
Markov subpartition of the axis consistent with the linear
function $f(x)=\Lambda x$ specifying the LDS
(\ref{eq:1})--(\ref{eq:3}) is associated with two parameters: the
parity of the number $m$ of subintervals $\{I_{0,j}\}$ (i.e.,
$s=(-1)^{m}$) and the integer-valued vector
$\vec{n}=(n_{1},n_{2},...,n_{\hat{m}}).$ These quantities
$(s,\vec{n})$ are called the parameters of the Markov subpartition
$\{I_{k,j}\}.$

\begin{proposition}\label{p:3aa}
The parameters of the Markov subpartition $\{I_{k,j}\}$, i.e.,
the parity of the number  $m$ and the integer-valued vector
$\vec{n}=(n_{1},n_{2},...,n_{\hat{m}})$ uniquely define the
quantity $\Lambda,$ the slope of the linear function
$f(x)=\Lambda x,$ consistent with the Markov subpartition
$\{I_{k,j}\}$ and a collection of numbers
$\{s_{j}\}_{j=1}^{\hat{m}}$ specifying the ends of the intervals
$I_{k,j}=[k+s_{j},k+s_{j+1})$ of this subpartition.
\end{proposition}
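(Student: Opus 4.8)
The plan is to recover $\Lambda$ and the numbers $s_j$ directly from the combinatorial data $(s,\vec n)$ by exploiting the self-similarity encoded in the relations (\ref{eq:3.1}). The key observation is that the sequence $0<s_0<s_1<s_2<\dots$ consists precisely of the points $k+x_j$ with $k\ge 0$ (and, when the partition point $x=0$ is absent, $k\ge 1$ for the zero-residue class) lying on the positive half-axis, so it is built by integer translates of the finite set $\{x_j\}_{j=1}^{m}$. Thus the sequence is eventually ``arithmetic-plus-periodic'': for all sufficiently large indices, $s_{i+m}=s_i+1$, because advancing $m$ steps in the list of endpoints corresponds to moving to the next unit interval $I_{k+1}$. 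This periodicity, together with the parity of $m$ (which tells us whether $0$ is one of the $s_j$), pins down the entire sequence $\{s_i\}$ once we know the finitely many ``initial'' gaps, i.e. once we know $\hat m$ and the vector $\vec n$.

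First I would make precise how $\vec n$ determines $\Lambda$. Write the first relation $\Lambda s_1=s_{1+n_1}$. Since $s_1$ is the smallest positive endpoint and equals $x_{j^*}$ for the first positive $x_j$ (or $x=0$'s successor), and since $s_{1+n_1}$ is a specific later endpoint expressible as an integer plus some $x_j$, the relation $\Lambda s_1=s_{1+n_1}$ becomes one linear equation relating $\Lambda$ to the unknowns $\{x_j\}$. Running through all $\hat m$ relations in (\ref{eq:3.1}) gives $\hat m$ linear equations; adjoining the normalization $x_{\hat m}=\tfrac12$ (the right endpoint of $I_0$) and the symmetry $x_{m-j}=-x_j$ forced by oddness of $f$, one obtains a linear system in the $\hat m$ unknowns $s_1,\dots,s_{\hat m}$ and the scalar $\Lambda$. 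I would argue this system has a unique solution: the map $x\mapsto\Lambda x$ must carry the endpoint set into itself (shifted by integers) in the prescribed combinatorial pattern, and a stretching ($\Lambda>1$) linear map with a prescribed finite orbit structure on $[0,\tfrac12]$ is rigid. Concretely, iterating $\Lambda s_1=s_{1+n_1}$ and using the eventual relation $s_{i+m}=s_i+1$ expresses $s_{1+n_1}$ as $\lfloor\cdot\rfloor+s_{r}$ for an explicit residue $r$ depending only on $n_1$ and $m$; substituting back yields $\Lambda s_1 - s_{\sigma(1)} \in \mathbb Z$ with known integer, and repeating across all the relations over-determines and hence uniquely fixes the ratios $s_j/s_1$ and the value $\Lambda$.

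The cleanest route, and the one I would actually write, is this: define $N=1+n_1+n_2+\dots+n_{\hat m}$, so that (\ref{eq:3.1}) says $\Lambda s_{\hat m}=s_N$. Because $s_{\hat m}=\tfrac12$ and $s_N = p + s_{r}$ where $p=\lfloor N/m\rfloor$-type integer and $r$ the residue class of $N$, both read off from $(m,\vec n)$ alone, we get $\Lambda = 2(p+s_r)$ — except $s_r$ is itself unknown. To close the loop, I would observe that the $\hat m$ equations, viewed as $\Lambda(s_1,\dots,s_{\hat m})^{T}$ equals a vector each of whose entries is an integer plus one of the $\pm s_j$, is a linear eigenvalue-type problem: $\Lambda \vec s = A\vec s + \vec b$ for an integer matrix $A$ and integer vector $\vec b$ determined by $(s,\vec n)$. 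Given $\Lambda>1$ this has a unique solution $\vec s=(\Lambda I - A)^{-1}\vec b$ provided $\Lambda$ is not an eigenvalue of $A$; and $\Lambda$ itself is then forced by the extra equation $s_{\hat m}=\tfrac12$, which I would show selects a single admissible value. The invertibility of $\Lambda I - A$ is where I would be most careful: $A$ arises from a ``first-return / renormalization'' map on the endpoints, its spectral radius is bounded by considerations of the total length $\tfrac12$ being preserved, and one shows the only way $\Lambda$ could coincide with an eigenvalue would contradict the strict ordering $s_1<\dots<s_{\hat m}$ or the stretching condition.

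The main obstacle is exactly this last rigidity step — verifying that the linear system $\Lambda\vec s = A\vec s+\vec b$ together with the single scalar constraint really has a \emph{unique} solution, rather than merely a finite-dimensional affine family, for every combinatorially admissible $(s,\vec n)$. I expect this to follow from a monotonicity/positivity argument: any two consistent slopes $\Lambda<\Lambda'$ with the same $\vec n$ would force two nested but differently-scaled endpoint configurations on $[0,\tfrac12]$ realizing the \emph{same} symbolic pattern, which a Perron–Frobenius-style argument on the (nonnegative, appropriately primitive) transition data rules out. With uniqueness of $\Lambda$ in hand, uniqueness of $\{s_j\}$ is immediate from $\vec s=(\Lambda I-A)^{-1}\vec b$, completing the proof.
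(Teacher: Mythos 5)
Your proposal follows essentially the same route as the paper: using the unit--periodicity $s_{i+m}=s_i+1$ of the endpoint sequence and the symmetry forced by oddness to rewrite every $s_k$ as an integer plus one of $\pm s_1,\dots,\pm s_{\hat m}$, so that the relations (\ref{eq:3.1}) become a linear system in $s_1,\dots,s_{\hat m}$ with coefficients that are integers or $\Lambda$, whose solvability (determinant) condition is an algebraic equation $R_{\vec n}(\Lambda)=0$ determining $\Lambda$ and thereafter the $s_j$. The rigidity step you single out as the main obstacle --- showing that a \emph{unique} admissible root $\Lambda$ is selected --- is left equally informal in the paper, which simply declares $\Lambda$ to be the maximal root of $R_{\vec n}$ in a subsequent remark.
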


\begin{proof}
Let the number of components of the vector $\vec{n}$ be equal to
$\hat{m}.$ Then the number $m$ of subintervals $\{I_{0,j}\}$ in
the Markov subpartition is given by the equality $m=2\hat{m}$ for
even $m$ and the equality $m=2\hat{m}-1$ for odd $m$ (the parity
of $m$ is specified by the quantity $s=(-1)^{m}$). We always have
$s_{\hat{m}}=\frac{1}{2}.$ For odd $m,$ the sequence $s_{1},
s_{2},...,s_{k},...$ has the form $s_{1},
s_{2},...,s_{\hat{m}+1},\frac{1}{2},1-s_{\hat{m}-1},1-s_{\hat{m}-2},...,1-s_{1},1+s_{1},1+s_{2},...,1+s_{\hat{m}-1},\frac{3}{2},2-s_{\hat{m}-1},...$
and is explicitly expressed via $s_{1},
s_{2},...,s_{\hat{m}}=\frac{1}{2}.$ Hence, we can explicitly
express each term of the sequence $s_{1}, s_{2},...,s_{k},...$ in
terms of an integer and one of the numbers  $s_{1},
s_{2},...,s_{\hat{m}}.$ Thus, equalities (\ref{eq:3.1}) turn into
a linear system for $s_{1}, s_{2},...,s_{\hat{m}}$ whose
coefficients are either integers or  the quantity
$\Lambda.$ The consistency condition for this system can be
formulated as the equality  of the determinant of this system to
zero. This gives the following algebraic equation for $\Lambda$:
$$
R_{\vec{n}}(\Lambda)=0.
$$

If we find  $\Lambda,$ then all $s_{1}, s_{2},...,s_{\hat{m}}$ can
be uniquely determined from (\ref{eq:3.1}). Similarly, we consider
the case of even $m.$ In this case, the sequence $s_{1},
s_{2},...,s_{k},...$ contains integer values and has the form
$s_{1},
s_{2},...,s_{\hat{m}}=\frac{1}{2},1-s_{\hat{m}-1},1-s_{\hat{m}-2},...,1-s_{1},0,1+s_{1},1+s_{2},....$
 \end{proof}

An implementation of this scheme by a constructive example is given in the next Proposition.

\begin{proposition}\label{p:6}
A linear function $f(x)=\Lambda x$ determining the LDS (\ref{eq:1})--(\ref{eq:3}) will be consistent with the Markov partition
$\{ [k-\frac12 ,\xi),[k-\xi ,k+\xi ),[k+\xi,k+\frac12 )\}_{k\in \mathbb Z}$, if for integers $0<m<n$ and the values $\varepsilon_1=\pm 1$, $\varepsilon_2=\pm 1$ we have
$$
\Lambda=\frac{2n+\varepsilon_2+\sqrt{(2n-\varepsilon_2)^2+8m\varepsilon_1}}2,
$$
$$
\xi =\frac{2m}{2n-\varepsilon_2+\sqrt{(2n-\varepsilon_2)^2+8m\varepsilon_1}}.
$$
\end{proposition}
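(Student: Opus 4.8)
The plan is to verify that the function $f(x)=\Lambda x$ with the stated $\Lambda$ satisfies Definition~\ref{D:3} with respect to the proposed three-interval Markov subpartition, which by Proposition~\ref{p:4} is equivalent to Definition~\ref{D:2}. Concretely, the positive breakpoints of the subpartition of $I_0=[-\tfrac12,\tfrac12)$ are $s_1=\xi$ and $s_2=\tfrac12$, so in the notation of Proposition~\ref{p:3aa} we have $\hat m=2$, and the vector $\vec n=(n_1,n_2)$ should encode how far the images $\Lambda\xi$ and $\Lambda\cdot\tfrac12$ jump along the global increasing sequence of breakpoints. Following Proposition~\ref{p:3aa}, I would set up the consistency relations \eqref{eq:3.1}: $\Lambda\xi$ must equal an integer plus one of $\{\pm\xi,\pm\tfrac12,0\}$, and $\Lambda\cdot\tfrac12$ must equal a half-integer. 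Since $\Lambda\cdot\tfrac12$ is forced to be a half-integer, write $\Lambda\cdot\tfrac12 = n + \tfrac{\varepsilon_2}{2}$ for some integer $n$ and sign $\varepsilon_2=\pm1$ (this accounts for $\Lambda$ odd versus even in parity up to the correction, generalizing Examples~\ref{Ex:1}, \ref{Ex:2}); thus $\Lambda = 2n+\varepsilon_2$ is \emph{not} quite right — rather, $\Lambda$ need not be an integer because the subpartition point $\xi$ is itself irrational, so the correct reading is that $\Lambda/2 - n/1$ lands on $s_j$; I would be careful here and instead directly impose $\Lambda\xi = n' + \varepsilon_1\xi'$-type relations and solve.

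The cleaner route: impose the two scalar equations that make the partition consistent. The map sends the breakpoint $\xi$ to $\Lambda\xi$ and the breakpoint $\tfrac12$ to $\tfrac{\Lambda}{2}$; consistency (Definition~\ref{D:3}) requires each of these to be of the form (integer) $+$ (one of $0,\pm\xi,\pm\tfrac12$). The natural ansatz matching the parameters $m,n,\varepsilon_1,\varepsilon_2$ is
\begin{equation}\label{eq:p6a}
\Lambda \xi = m - \varepsilon_1 \xi, \qquad \frac{\Lambda}{2} = n + \frac{\varepsilon_2}{2} - \varepsilon_2 \xi \cdot 0,
\end{equation}
or some sign variant thereof; I would determine the exact signs by checking small cases against Examples~\ref{Ex:1}–\ref{Ex:2} and against the requirement $0<\xi<\tfrac12$. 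From the first equation, $\xi(\Lambda+\varepsilon_1)=m$, i.e. $\xi = m/(\Lambda+\varepsilon_1)$. Substituting into the relation coming from $\tfrac{\Lambda}{2}$ — which, once the correct sign pattern is fixed, reads something like $\Lambda = 2n+\varepsilon_2 - 2\varepsilon_2\xi$ or $\Lambda\cdot\tfrac12 + (\text{correction in }\xi) = n+\tfrac12\varepsilon_2$ — yields a single quadratic equation in $\Lambda$ (equivalently in $\xi$). Solving that quadratic and selecting the root with $\Lambda>1$ and $0<\xi<\tfrac12$ should reproduce exactly the two displayed formulas; one checks that the discriminant $(2n-\varepsilon_2)^2+8m\varepsilon_1$ is positive under $0<m<n$ (for $\varepsilon_1=-1$ this uses $8m < 8n \le (2n-\varepsilon_2)^2$ when $n\ge1$, with the edge cases handled directly), so $\Lambda$ and $\xi$ are real.

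After deriving the formulas I would close the loop in the reverse direction, as Proposition~\ref{p:4} demands a genuine "if": starting from the displayed $\Lambda$ and $\xi$, verify algebraically that $\xi(\Lambda+\varepsilon_1)=m$ and that $\tfrac{\Lambda}{2}$ differs from an integer by the appropriate element of $\{0,\pm\xi,\pm\tfrac12\}$, hence every breakpoint $k-\tfrac12,\ k-\xi,\ k+\xi,\ k+\tfrac12$ is mapped by $f$ to an integer plus such an element, and $f$ is linear nonconstant on each piece (indeed $\Lambda>1>0$). That is precisely Definition~\ref{D:3}, so by Proposition~\ref{p:4} the partition is consistent. I expect the main obstacle to be purely bookkeeping: pinning down the correct correspondence between the four sign/size choices $(m,n,\varepsilon_1,\varepsilon_2)$ and the combinatorial data $(\,$parity of number of subintervals, which element of $\{0,\pm\xi,\pm\tfrac12\}$ each image hits$\,)$, and checking that the constraints $0<m<n$, $\varepsilon_i=\pm1$ are exactly what guarantees $\Lambda>1$, $0<\xi<\tfrac12$, and a positive discriminant — there are no deep ideas, just a careful case analysis to make sure no spurious or missing solutions appear.
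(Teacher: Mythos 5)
Your overall strategy is exactly the paper's: the published proof consists of nothing more than writing down the two consistency relations of type (\ref{eq:3.1}) for the breakpoints $\xi$ and $\tfrac12$ and leaving the quadratic to the reader. But the one place where the actual content of the proposition lives --- \emph{which} pair of relations to impose --- is precisely the step you leave to later sign-chasing, and your written ansatz is not a mere sign variant of the correct one. The relations the paper uses are
$$
\Lambda\xi = m + \varepsilon_2\xi, \qquad \frac{\Lambda}{2} = n + \varepsilon_1\xi,
$$
i.e.\ the image of the breakpoint $\xi$ lands on a breakpoint of type $\pm\xi$ shifted by the integer $m$, and the image of $\tfrac12$ likewise lands on a breakpoint of type $\pm\xi$ shifted by the integer $n$. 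Your second equation $\tfrac{\Lambda}{2}=n+\tfrac{\varepsilon_2}{2}$ cannot be repaired by adjusting signs: it forces $\Lambda=2n+\varepsilon_2\in\mathbb Z$, whereas the displayed $\Lambda$ is generically irrational (you half-notice this but never replace the equation). In your first equation the roles of $\varepsilon_1$ and $\varepsilon_2$ are interchanged relative to what the stated formula for $\xi$ demands: from $\Lambda\xi=m+\varepsilon_2\xi$ one gets $\xi=m/(\Lambda-\varepsilon_2)$, and since $\Lambda-\varepsilon_2=\tfrac12\bigl(2n-\varepsilon_2+\sqrt{(2n-\varepsilon_2)^2+8m\varepsilon_1}\bigr)$ this is exactly the stated $\xi$; eliminating $\xi$ between the two relations gives $\Lambda^2-(2n+\varepsilon_2)\Lambda+2n\varepsilon_2-2m\varepsilon_1=0$, whose larger root is the stated $\Lambda$. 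Once the correct pair of relations is in place, the remainder of your plan (verifying Definition~\ref{D:3} at all breakpoints, positivity of the discriminant under $0<m<n$, and $0<\xi<\tfrac12$) is sound, and indeed more careful than the paper, which omits all of these checks.
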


\begin{proof} This is a consequence of the equalities (\ref{eq:3.1}), in this case having the form
$$
\Lambda \xi =m+\varepsilon_2\xi ,
$$
$$
\frac{\Lambda}2=n+\varepsilon_1\xi .
$$
 \end{proof}

It is worth noting that the set of values of the slope $\Lambda$
of the linear function $f(x)=\Lambda x,$\,$x \in
I_0=[-\frac{1}{2},\frac{1}{2}),$ for which it is possible to
construct Markov partitions consistent with the LDS
(\ref{eq:1})--(\ref{eq:3}) is everywhere dense on the half axis
$(2, \infty)$ \cite{CAM, Kl}.

\noindent  \section{Deterministic Diffusion}\label{Sec:4}

We consider the LDS (\ref{eq:1})--(\ref{eq:3}) for which the
collection of intervals $\{I_{k}\}_{k\in Z},$\,
$I_k=[k-\frac{1}{2},k+\frac{1}{2}),$\ forms a Markov partition of
the phase space consistent with the action of the LDS. This means
that, for a time unit, the LDS maps the measure $\mu_0$ with unit
density in the interval $I_0$  into a probability measure with
constant densities $p_{k}\geq 0$ in the intervals $I_k$ and
$\sum\limits_{k \in Z}\,p_k=1.$ As a result of multiple
application of the LDS, the initial measure $\mu_0$ is
transformed into a measure with constant densities in the
intervals $I_k.$ Let $P_k(n)$ be the density of the measure in the
interval $I_k$ after the $n$-fold action of the LDS upon the
initial measure $\mu_0.$ Then
\begin{equation}\label{eq:4.1}
P_k(n+1)=\sum\limits_{l \in Z}\,p_{k-l}P_l(n)
\end{equation}
and $P_k(0)=\delta_{k,0}$ due to the choice of the initial measure
$\mu_{0}.$ The asymptotic behavior of the quantities $P_k(n)$ for
large  $n,$ as solutions of Eq.~(\ref{eq:4.1}), is described by
the well-known central limit theorem of the probability
theory~\cite{F,GK}. Indeed, if we consider the sum
$\xi=\xi_{1}+\xi_{2}+...+\xi_{n}$ of $n$ independent random
variables each of which takes only integer values $k$ with
probability $p_{k},$ then we get Eq.~(\ref{eq:4.1}), where
$P_k(n)$ is the probability of the event that $\xi$ takes the
value $k.$

\begin{theorem}\label{Th:1}
Let the numbers $\{p_{k}\}_{k \in Z}$ take nonnegative values such
that $\sum\limits_{k \in Z}\,p_k=1,$  let the greatest common
divisor of the numbers $k$ with $p_{k}>0$ be equal to 1, and let
there exist the first and second moments

\begin{equation}\label{eq:4.2}
\sigma_1=\sum\limits_{k \in Z}\,k
p_k,\quad\sigma^{2}=\sum\limits_{k \in Z}\,k^{2} p_k.
\end{equation}

Then the solution of Eq.~(\ref{eq:4.1}) as $n\rightarrow \infty$
has an asymptotics
\begin{equation}\label{eq:4.3}
P_k(n)-\frac{1}{\sigma\sqrt{2\pi
n}}e^{-\frac{(x-\xi_n)^2}{2\sigma^2 n}}\rightarrow 0
\end{equation}
uniformly in  $k.$ The character of convergence in (\ref{eq:4.3})
depends on additional conditions, e.g., on the existence of the
third moment (the Laplace theorem).
\end{theorem}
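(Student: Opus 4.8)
The plan is to recognize Eq.~(\ref{eq:4.1}) as the convolution recursion governing the distribution of a sum of i.i.d.\ integer-valued random variables, and then to invoke the appropriate local limit theorem. First I would set up the probabilistic model: let $\xi_1,\xi_2,\dots$ be i.i.d.\ with $\mathbb{P}(\xi_i=k)=p_k$, and put $S_n=\xi_1+\cdots+\xi_n$. A one-line induction on $n$ using $P_k(0)=\delta_{k,0}$ shows that $P_k(n)=\mathbb{P}(S_n=k)$ solves (\ref{eq:4.1}); indeed conditioning on $\xi_{n+1}=k-l$ gives exactly $P_k(n+1)=\sum_{l}p_{k-l}P_l(n)$. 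The hypotheses then translate directly: $\sum_k p_k=1$ makes this a genuine probability law, the existence of $\sigma_1$ and $\sigma^2$ in (\ref{eq:4.2}) gives a finite mean $\mathbb{E}\xi_i=\sigma_1$ and finite variance $\mathrm{Var}\,\xi_i=\sigma^2-\sigma_1^2$ (here one should read the $\sigma^2$ in (\ref{eq:4.3}) as the variance, and note $\xi_n=n\sigma_1$ is the mean of $S_n$), and the gcd condition is precisely the statement that the lattice span of $\{k:p_k>0\}$ is $1$, i.e.\ $S_n$ is not eventually supported on a proper sublattice.

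With the model in place, the assertion (\ref{eq:4.3}) is exactly the classical \emph{local} central limit theorem for lattice-valued random variables (Gnedenko's theorem): under finite variance and aperiodicity (span $1$),
$$
\sup_{k\in\mathbb{Z}}\left|\sqrt{n}\,\mathbb{P}(S_n=k)-\frac{1}{\sigma\sqrt{2\pi}}\,e^{-(k-n\sigma_1)^2/(2\sigma^2 n)}\right|\longrightarrow 0
$$
as $n\to\infty$, which is (\ref{eq:4.3}) after dividing by $\sqrt n$. I would cite this from \cite{GK} (or \cite{F}) rather than reprove it. If a self-contained argument is wanted, the standard route is via characteristic functions: write $\varphi(t)=\sum_k p_k e^{ikt}$ for the characteristic function of $\xi_i$, so that by Fourier inversion on the circle
$$
\mathbb{P}(S_n=k)=\frac{1}{2\pi}\int_{-\pi}^{\pi}\varphi(t)^n e^{-ikt}\,dt .
$$
One rescales $t\mapsto t/\sqrt n$, uses the Taylor expansion $\log\varphi(t)=i\sigma_1 t-\tfrac12(\sigma^2-\sigma_1^2)t^2+o(t^2)$ near $t=0$ to get pointwise convergence of the integrand to the Gaussian characteristic function, and controls the tail $|t|\ge\delta$ using that $|\varphi(t)|<1$ there — this is exactly where the gcd/span-$1$ hypothesis is used, since it guarantees $\varphi(t)=1$ only at $t\in 2\pi\mathbb{Z}$. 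Dominated convergence then yields the uniform-in-$k$ estimate.

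The main obstacle, and the only genuinely delicate point, is the uniformity in $k$ together with the tail control of the characteristic function: pointwise (fixed $k$) convergence is routine, but to get a bound uniform in $k$ one must split the inversion integral into a central part near $t=0$ (where the rescaled integrand converges uniformly to a Gaussian with an integrable dominating function) and a peripheral part $\delta\le|t|\le\pi$, and show the peripheral contribution is $o(n^{-1/2})$ uniformly in $k$. The latter follows from $\sup_{\delta\le|t|\le\pi}|\varphi(t)|=:q<1$ — valid precisely because the span is $1$ — giving a peripheral contribution bounded by $q^n$, which is exponentially small. The remark about the third moment and the Laplace theorem is just the observation that adding $\mathbb{E}|\xi_i|^3<\infty$ upgrades the $o(1)$ in (\ref{eq:4.3}) to an $O(n^{-1/2})$ rate via the Edgeworth/Berry–Esseen refinement, and needs no separate argument here.
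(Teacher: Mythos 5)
Your proposal follows essentially the same route as the paper: both recognize (\ref{eq:4.1}) as the convolution recursion for sums of i.i.d.\ integer-valued variables, pass to the characteristic function $P(\lambda)=\sum_k p_k e^{ik\lambda}$, use the inversion integral $P_k(n)=\frac{1}{2\pi}\int_{-\pi}^{\pi}[P(\lambda)]^n e^{-ik\lambda}\,d\lambda$, and conclude via the Taylor expansion of $P(\lambda)$ near $\lambda=0$ (the local central limit theorem). In fact your write-up is more careful than the paper's sketch, since you explicitly supply the two points the paper glosses over --- the peripheral bound $\sup_{\delta\le|\lambda|\le\pi}|P(\lambda)|<1$ (which is where the gcd hypothesis enters) and the resulting uniformity in $k$ --- and you correctly note that the $\sigma^2$ appearing in the Gaussian should be read as the variance rather than the raw second moment.
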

\medbreak

\begin{proof}
We now briefly present the well-known scheme of the proof of
Theorem~\ref{Th:1} based on the fact that Eq.~(\ref{eq:4.1}) is a
difference analog of the convolution equation. As a result of the
Fourier transformation, the convolution turns into the product.
Let $P(\lambda, n)= \sum\limits_{k \in Z}\, P_k(n) e^{ik\lambda}$
be the characteristic function of the solution of
Eq.~(\ref{eq:4.1}), $P(\lambda)= \sum\limits_{k \in Z}\, p_k
e^{ik\lambda}.$ Thus, we get the following formula from
Eq.~(\ref{eq:4.1}):
\begin{equation}\label{eq:4.4}
P(\lambda,n)= [P(\lambda)]^n P(\lambda,0).
\end{equation}

Since the initial measure is concentrated on $I_0$ and its density
is constant, we conclude that $P(\lambda,0)=1$ and
\begin{equation}\label{eq:4.5}
P_k(n)=\frac{1}{2\pi} \int\limits_{-\pi}^{\pi} [P(\lambda)]^n
e^{-ik\lambda}\,d\lambda
\end{equation}

As $\lambda\rightarrow 0,$ $P(\lambda)= \sum\limits_{k \in
Z}\,p_k e^{ik\lambda}= 1+
i\sigma_1\lambda-\frac{\sigma^2\lambda^2}{2}+o(1)=e^{i\sigma_1\lambda-\frac{\sigma^2\lambda^2}{2}}+o(1).$
Hence,
$$
[P(\lambda)]^n=e^{i\sigma_1 n \lambda -\frac{\sigma^2  n
\lambda^2}{2}}+o(1).
$$
Substituting this result in (\ref{eq:4.5}) and integrating along
the entire axis, we obtain (\ref{eq:4.3}).

\end{proof}

The degree of closeness of two probability measures $\mu$ and
$\nu$ with densities $p(x)$ and $q(x)$  on the axis is often
estimated by the value $d(\mu,\nu)$ of the deviation (uniform in
$x)$ of the distribution functions
$$
P(x)=\int\limits_{-\infty}^{x}\,p(s)\,ds=\mu((-\infty,x)),
$$
$$
Q(x)=\int\limits_{-\infty}^{x}\,q(s)\,ds=\nu((-\infty,x)),
$$
i.e.,
\begin{equation}\label{eq:4.6}
d(\mu,\nu)= \sup\limits_{x}\,|P(x)-Q(x)|
\end{equation}

\begin{definition}\label{D:4}
We say that two sequences of measures $\mu_{n}$ and $\nu_{n}$ are
asymptotically equivalent as  $n\rightarrow\infty$ if
$\lim\limits_{n\rightarrow \infty}d(\mu_{n},\nu_{n})=0.$ For
normal measures $\nu_{n}$  with variances $\sigma^{2}_{n}$ and
means $\xi_{n},$ i.e., in the case where the density of measures
has the form of a Gauss curve
\begin{equation}\label{eq:4.7}
q_{n}(x)=\frac{1}{\sqrt{2\pi\sigma_{n}^{2}}}e^{-\frac{(x-\xi_{n})^2}{2\sigma_{n}^{2}}},
\end{equation}
we say that the sequence of measures  $\mu_{n}$ is asymptotically
normal as $n\rightarrow\infty.$ In addition, if
$\frac{1}{2}\sigma_{n}=D n,$ then we say that the sequence of
measures $\mu_{n}$ determines a normal diffusion with diffusion
coefficient $D.$ If the variance $\sigma_{n}$ depends nonlinearly
on $n,$ then the deterministic diffusion is anomalous.
\end{definition}

The result of Theorem~\ref{Th:1} for the LDS can be interpreted as
follows: The initial measure with unit density in the interval
$I_0$ can be regarded as randomly specified  initial data $x_{0}$
for the LDS uniformly distributed over the interval  $I_0.$ Then,
for large times, as $n \rightarrow \infty,$ the position of
$x_{n}$ is randomly distributed according to the normal law with
mean value  $\xi_n$ and variance $\sigma_n^{2}.$

In other words, in this case, we have the deterministic diffusion
with the diffusion coefficient $D=\frac{\sigma_n^{2}}{2n}$. Our
first aim is to study this phenomenon.

First, we reformulate the result of Theorem~\ref{Th:1} for the LDS
(\ref{eq:1})--(\ref{eq:3}) with a piecewise-linear function $f.$

\begin{theorem}\label{Th:2}
Assume that a function $f$ determining the LDS
(\ref{eq:1})--(\ref{eq:3}) is a piecewise linear function taking
different half-integer values at the ends of all linear parts.

Suppose that there exists
\begin{equation}\label{eq:4.6}
D=\frac{1}{2}\,\int\limits_{-\frac{1}{2}}^{\frac{1}{2}}\,|f(x)|^2\,dx-\frac{1}{24}.
\end{equation}

Then, after $n$ iterations in  the LDS, the initial measure
$\mu_{0}$ with unit density in the interval $I_0$ is
asymptotically mapped into a measure with normal distribution and
the diffusion coefficient $D.$
\end{theorem}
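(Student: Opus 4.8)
The plan is to deduce Theorem~\ref{Th:2} from Theorem~\ref{Th:1} by identifying the jump probabilities $p_k$ attached to the piecewise-linear $f$ and then computing the two moments $\sigma_1$ and $\sigma^2$ of (\ref{eq:4.2}) in terms of the integral $\int_{-1/2}^{1/2}|f(x)|^2\,dx$. First I would invoke Proposition~\ref{p:3} (or rather the remark following it): since $f$ is piecewise linear and takes distinct half-integer values at the ends of its linear pieces, the partition $\{I_k\}_{k\in\mathbb Z}$ is a Markov partition consistent with the LDS in the sense of Definition~\ref{D:2}. Hence the measure $\mu_0$ with unit density on $I_0$ is sent to a measure with constant densities $p_k\ge 0$ on each $I_k$, with $\sum_k p_k=1$, and iterating gives exactly the convolution recursion (\ref{eq:4.1}) with $P_k(0)=\delta_{k,0}$. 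Concretely, $p_k=\mathrm{mes}\,\{x\in I_0:\ f(x)\in I_k\}=\mathrm{mes}\,(f^{-1}(I_k)\cap I_0)$, i.e. $p_k$ is the Lebesgue measure of the set where the nearest-integer value $[f(x))$ equals $k$.

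Next I would check the hypotheses of Theorem~\ref{Th:1}. Nonnegativity and $\sum p_k=1$ are immediate. Finiteness of the first and second moments follows because $f$ is bounded on $I_0$ (finitely many linear pieces, half-integer endpoint values), so only finitely many $p_k$ are nonzero; in particular existence of $D$ in (\ref{eq:4.6}) is automatic and the hypothesis ``suppose there exists $D$'' is a formality. The one genuine point to verify is the aperiodicity condition: the greatest common divisor of $\{k:p_k>0\}$ equals $1$. I would argue this from the structure of a stretching lift-$1$ map: because $f$ has lift $1$ and slope $>1$ on each piece while $f(x)-x$ is periodic, the displacement $f(x)-x$ on $I_0$ takes a range of values straddling integers so that consecutive integers $k$ and $k+1$ both occur as $[f(x))$; hence $\gcd=1$. (If one wants to allow degenerate configurations where this fails, one restricts to the subgroup generated by the support and the Gaussian lives on the corresponding sublattice — but for a genuinely stretching $f$ with half-integer breakpoints this does not happen.)

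The computational heart is the moment identities. For $\sigma_1=\sum_k k\,p_k$ I would write $\sigma_1=\sum_k k\,\mathrm{mes}\,\{x\in I_0:[f(x))=k\}=\int_{-1/2}^{1/2}[f(x))\,dx$. Now the key observation is that on each linear piece $I_{0,j}$ the function $f$ maps $I_{0,j}$ linearly onto a union of whole intervals $I_k$ (this is precisely Definition~\ref{D:3}/Proposition~\ref{p:4}), so $f(x)$ differs from the step function $[f(x))$ by the ``fractional part'' $f(x)-[f(x))$, which, as $x$ ranges over $I_{0,j}$, sweeps each value in $[-\tfrac12,\tfrac12)$ the same number of times with uniform density; by symmetry its average over $I_{0,j}$ — and hence over $I_0$ — is $0$. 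Therefore $\sigma_1=\int_{-1/2}^{1/2}f(x)\,dx=0$ (also directly from oddness if $f$ is odd, but the step-function argument works in general and forces $\xi_n=0$ up to $o(1)$). For the second moment, $\sigma^2=\sum_k k^2 p_k=\int_{-1/2}^{1/2}[f(x))^2\,dx$. Writing $[f(x))=f(x)-\varphi(x)$ with $\varphi(x):=f(x)-[f(x))$ the fractional-part piece, expand
\[
\sigma^2=\int_{-1/2}^{1/2}\!f(x)^2\,dx-2\int_{-1/2}^{1/2}\!f(x)\varphi(x)\,dx+\int_{-1/2}^{1/2}\!\varphi(x)^2\,dx .
\]
On each linear piece $\varphi$ runs through a full copy of the sawtooth on $[-\tfrac12,\tfrac12)$, and writing $f(x)=[f(x))+\varphi(x)$ with $[f(x))$ constant on the sub-pieces where $\varphi$ completes one period, one gets $\int f\varphi=\int\varphi^2$ on each piece (the cross term with the integer part integrates to zero by the same symmetry as above); hence the middle term cancels one copy of the last term and
\[
\sigma^2=\int_{-1/2}^{1/2}f(x)^2\,dx-\int_{-1/2}^{1/2}\varphi(x)^2\,dx=\int_{-1/2}^{1/2}|f(x)|^2\,dx-\frac{1}{12},
\]
since $\int_{-1/2}^{1/2}\varphi^2$ (a sawtooth of amplitude $\tfrac12$) equals $\int_{-1/2}^{1/2}t^2\,dt=\tfrac{1}{12}$. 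Then Theorem~\ref{Th:1} gives $P_k(n)\sim(\sigma\sqrt{2\pi n})^{-1}\exp(-k^2/(2\sigma^2 n))$, i.e. an asymptotically normal measure with mean $0$ and variance $\sigma^2 n=2Dn$ where $D=\tfrac12\sigma^2=\tfrac12\int_{-1/2}^{1/2}|f(x)|^2dx-\tfrac1{24}$, matching (\ref{eq:4.6}) and Definition~\ref{D:4}.

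The main obstacle is the bookkeeping in the second-moment computation: one must justify carefully that on each linear piece the image is an exact union of unit intervals and that, consequently, $\varphi$ (the deviation of $f$ from its nearest-integer value) is a genuine rescaled copy of the centered sawtooth, so that the cross terms involving the integer part vanish by symmetry and $\int f\varphi=\int\varphi^2$ piecewise. Once that structural fact — which is exactly what Definition~\ref{D:3} and Proposition~\ref{p:4} encode — is in hand, the rest is the routine Fourier/CLT argument already recorded in the proof of Theorem~\ref{Th:1}. A secondary point worth a sentence is the aperiodicity (gcd $=1$) hypothesis, which should be checked from the stretching + lift-$1$ hypotheses so that the limiting Gaussian is supported on all of $\mathbb Z$ rather than a proper sublattice.
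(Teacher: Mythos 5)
Your proposal follows essentially the same route as the paper: reduce to Theorem~\ref{Th:1} via Propositions~\ref{p:3} and \ref{p:4}, identify $p_k=\mes(f^{-1}(I_k)\cap I_0)$, and obtain $\sum_k k^2p_k=\int_{-1/2}^{1/2}|f|^2\,dx-\frac{1}{12}$ by a piecewise computation on the linear parts (the paper integrates $|f|^2$ directly over each piece to get $(k^2+\frac1{12})(b-a)$, which is algebraically the same as your decomposition $f=[f)+\varphi$ with the sawtooth $\varphi$). One caveat: your side claim $\sigma_1=\int_{-1/2}^{1/2}f(x)\,dx=0$ is unjustified in general (e.g.\ a two-piece map with $p_0=p_1=\frac12$ has $\sigma_1=\frac12$); the first equality is correct but the vanishing requires oddness or a similar hypothesis. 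Since the theorem and Definition~\ref{D:4} permit a nonzero drift $\xi_n$, this does not affect the stated conclusion, and your explicit check of the $\gcd$ condition of Theorem~\ref{Th:1} is a point the paper silently omits.
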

\medbreak
\begin{proof}
In view of Theorem~\ref{Th:1} and Propositions~\ref{p:3} and
\ref{p:4}, it is necessary to show that
\begin{equation}\label{eq:4.62}
\sigma^{2}= \sum\limits_{k \in Z}\,k^2 p_k=
\int\limits_{-\frac{1}{2}}^{\frac{1}{2}}\,|f(x)|^2\,dx-\frac{1}{12}.
\end{equation}

This can readily be proved because the integral of the piecewise
linear function $f$ can easily be taken. If the function $f(x)$ is
linear in the segment  $[a,b]$ and takes  values $k-\frac{1}{2}$
and  $k+\frac{1}{2}$ at the ends of this segment, then
$\int\limits_{a}^{b}\,|f(x)|^2\,dx=(k^{2}+\frac{1}{12})(b-a).$

We get $p_k=\mu_{1}(I_{k})= \mu(f^{-1}(I_{k})).$ The set
$f^{-1}(I_{k}) \bigcap I_{0}$ is formed by several subintervals
$I_{0,j}$ of the interval  $I_0$ in which the function  $f(x)$
takes values from the interval $[j-\frac{1}{2},j+\frac{1}{2}).$
Hence, $\int\limits_{f^{-1}(I_{k}) \bigcap
I_{0}}\,|f(x)|^2\,dx=(k^{2}+\frac{1}{12})\sum\mu_{0}(I_{0,j})=(k^{2}+\frac{1}{12})p_k.$

As a result of summation over  $k,$ we obtain
$$
\int\limits_{-\frac{1}{2}}^{\frac{1}{2}}\,|f(x)|^2\,dx=
\sum\limits_{k \in Z}\,(k^{2}+\frac{1}{12})p_k,
$$
which is equivalent to (\ref{eq:4.62}).
\end{proof}

\begin{example}\label{Ex:5}
  Let $f(x)=\Lambda x$, where $\Lambda$ is a positive
  number. Then function $f$
   satisfies the condition of Theorem~\ref{Th:2}
   if and only if $\Lambda $ is an odd number and $\Lambda > 1$.
  Formula~(\ref{eq:4.6}) gives $D=1/24(\Lambda^2-1)$, which of
  course, coincides with the formula known in the literature
  (see, for example,~(23.21) in~\cite{CAM} and references therein.)
\end{example}
\begin{example}\label{Ex:6}(zig-zag map)
  On the interval $[-1/2,\, 1/2 ],$ let us consider an odd piecewise
  linear function $f$ that takes the half-integer value $f(\xi)=p+1/2$
  at a point $\xi\, (0<\xi< 1/2)$. Let $f(0)=0$ and $f(1/2)=1/2.$ Then
  the diffusion coefficient, according to ~(\ref{eq:4.6}), has the
  following form:
  $$D=\frac{p+1}{12}(2p+1-2\xi).$$
  It argees with known results
  (see~\cite{KK} and the references therein.)
\end{example}

By using the results of Theorems~\ref{Th:1} and \ref{Th:2}, we can
give the following definition of  deterministic diffusion for the
dynamical system (\ref{eq:1}):
\begin{definition}\label{D:5}
We say that the one-dimensional DS (\ref{eq:1}) has a
deterministic diffusion if, for any initial probability measure
$\mu_{0}$ with bounded density, there exist a sequence of numbers
$\sigma_{n}^{2}> 0$ and $\xi_{n}$ such that the sequence of
measures  $\mu_{n}=F^{n} \mu_{0}$ obtained from the initial
measure by the $n$-fold action of the DS is asymptotically
equivalent, as $n\rightarrow\infty,$ to  a sequence of normal
measures with variances  $\sigma_{n}^{2}$ and  mean values
$\xi_{n}$
\end{definition}

The main problems connected with the deterministic diffusion for
DS is to establish the fact of existence of this  diffusion in
terms of the functions $f$ specifying the DS (\ref{eq:1}). The
construction of an efficient algorithm for the determination the
coefficient of deterministic diffusion $D$ and  drift $\xi_{n}$
seems to be an important problem, especially from the viewpoint of
applications. At present, a series of expressions is deduced and
various numerical methods for the analysis of the dependence of
the diffusion coefficient on the form of the functions $f$ are
developed. Note that the dependence of the coefficient of
deterministic diffusion for the LDS (\ref{eq:1})--(\ref{eq:3}) on
$\Lambda$ is quite complicated (nowhere differentiable fractal
dependence) even for the linear function $f(x)=\Lambda x$ in the
interval $I_0=[-\frac{1}{2},\frac{1}{2}),$ \cite{CAM,Kl}.

We now present heuristic arguments for the existence of
deterministic diffusion for the LDS with the linear function
$f(x)=\Lambda x,$\, $\Lambda>2,$ in the interval
$I_0=[-\frac{1}{2},\frac{1}{2}).$ In this case, the LDS
(\ref{eq:1})--(\ref{eq:3}) can be represented in the form

\begin{equation}\label{eq:13}
x_{n+1}=x_{n}+(\Lambda-1) \{x_{n}),
\end{equation}
where $\{x_{n})=x-[ x)$ is the fractional part of the number  $x$
and $[ x)$ is the nearest integer for the number $x.$ Equation
(\ref{eq:13}) can be represented in the equivalent form
\begin{equation}\label{eq:14}
x_{n+1}=x_{0}+\sum\limits_{k=1}^{n}(\Lambda-1) \{x_{k}).
\end{equation}

If the map $f $ is stretching, i.e., $\Lambda>2,$ and the initial
value $x_{0}$  takes values in the interval $I_0$ with a certain
(e.g., constant) probability density, then we can assume that the
quantities $\{x_{k}),$ the fractional parts of $x_{k}$ are
uniformly distributed over the interval $I_0$ and independent for
different $k.$ The rigorous substantiation of the uniformity of
distributions of the fractional parts for different stretching
maps can be found in \cite{Ka2}. According to (\ref{eq:14}), the
quantities  $x_{n}$ can be regarded as the sum of $x_{0}$ and $n$
independent identically distributed random variables. Thus, by the
central limit theorem, the quantities $x_{n},$ as
$n\rightarrow\infty,$ are distributed according to the normal law
with zero mean value and the variance equal to the sum of
variances of the terms. Since the variance of $\{x),$ regarded as
a variable uniformly distributed in the interval
$[-\frac{1}{2},\frac{1}{2}),$ is equal to $\sigma^{2} =
\int\limits_{-\frac{1}{2}}^{\frac{1}{2}}\,x^2\,dx=\frac{1}{12},$
we get $\sigma^{2}(x_{n+1})=\frac{(\Lambda -1)^{2}}{12}.$ This
yields  the approximate relation for the diffusions coefficients
$D=\frac{(\Lambda -1)^{2}}{24}$ for any linear map $f(x)=\Lambda
x$ in the LDS (\ref{eq:1})--(\ref{eq:3}).

Let us turn to the case where the Markov subpartition is
consistent with the LDS according to Definition~\ref{D:2}. In this
case, by $P_{k,j}(n),$  we denote the densities in each
subinterval $I_{k,j}$ of the interval $I_k,$\, $k \in Z ,$\,
$j=1,2,...,m,$ for $n$ iterations. If we consider the collection
of $P_{k,j}(n)$ $j=1,2,...,m,$ as the components of the vector
$P_{k}(n)=\col(P_{k,1}^{(n)},...,P_{k,m}^{(n)}),$ then we get an
analog of Eq.~(\ref{eq:4.1}), where the quantities $P_k(n)$ are
vectors, $p_k=\{p_{k,i,j}\}|_{i,j=1}^{m}$ is the transition matrix
for the Perron--Frobenius operator, and $p_{k,i,j}$  is the
density of measure in $I_{k,i}$ in the case of  single application
of the LDS to the measure with unit density on $I_{\theta,j}.$
This vector analog of Eq.~(\ref{eq:4.1}) is also well studied and
the solution $P_k(n)$ leads to the normal distribution in the
$m$-dimensional space \cite{F,B}.

\begin{theorem}\label{Th:4.1}
Assume that the Markov subpartition $\{I_{k,j}\}_{k\in Z, 1\leq j
\leq n }$ of the axis is consistent with the action of the  LDS
(\ref{eq:1})--(\ref{eq:3}) and that the function $f(x)$ is
stretching and maps the interval $I_0=[-\frac{1}{2},\frac{1}{2})$
into a finite interval $[a,b)$ of length greater than 2. Then,
after the $n$-fold action of the LDS, the initial measure
$\mu_{0}$ in $I_0$ with bounded density is transformed into a
measure $\mu_{n}$ asymptotically equivalent, as
$n\rightarrow\infty,$ to a normal measure with densities
$P_{k,j}(n)$ in the intervals $I_{k,j}$
\begin{equation}\label{eq:4.7}
P_{k,j}(n)=\frac{\alpha_{j}}{2\sqrt{\pi D
n}}e^{-\frac{(k-\xi_n)^2}{4D n}},\,\, j=1,2,...,m,
\end{equation}
where  $\xi_n$ is the drift, $D$ is the coefficient of
deterministic diffusion, and the parameters $\alpha_{j}>0,$\,\,
$j=1,2,...,m,$ specify the distributions of densities in the
subintervals $I_{k,j}$ of the intervals $I_{k},$\,$k\in Z.$
\end{theorem}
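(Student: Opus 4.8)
The plan is to reduce the statement to the vector-valued (multitype) central limit theorem for the recurrence
\[
P_k(n+1)=\sum_{l\in Z} p_{k-l}\,P_l(n),
\]
where now each $P_k(n)=\col(P_{k,1}(n),\dots,P_{k,m}(n))$ is an $m$-vector and each $p_l=\{p_{l,i,j}\}_{i,j=1}^m$ is an $m\times m$ nonnegative matrix, exactly as set up in the paragraph preceding the statement. First I would note that consistency of the Markov subpartition (Definition~\ref{D:2}, equivalently Definition~\ref{D:3} via Proposition~\ref{p:4}) guarantees that this vector recurrence is exact: the Perron--Frobenius operator applied to a measure that is piecewise constant on the $\{I_{k,j}\}$ stays in that class, so the evolution of the finitely many densities per cell is governed by the matrix convolution above, with $\sum_{l\in Z}\sum_{i} (\text{row sums of }p_l)$ normalized appropriately and $P_k(0)$ supported on $I_0$.

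Next I would pass to the discrete Fourier transform in the spatial index $k$, setting $\widehat P(\lambda,n)=\sum_k P_k(n)e^{ik\lambda}$ and $\widehat p(\lambda)=\sum_l p_l e^{il\lambda}$ (an $m\times m$ matrix-valued trigonometric polynomial), so that $\widehat P(\lambda,n)=\widehat p(\lambda)^n\,\widehat P(\lambda,0)$, in parallel with~(\ref{eq:4.4}). The hypothesis that $f$ is stretching and maps $I_0$ onto an interval $[a,b)$ of length $>2$ plays the role of the aperiodicity/irreducibility condition: it forces the set of jumps $l$ with $p_l\neq 0$ to have gcd $1$ and makes the nonnegative matrix $\widehat p(0)=\sum_l p_l$ primitive (irreducible and aperiodic). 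By Perron--Frobenius, $\widehat p(0)$ has a simple dominant eigenvalue equal to $1$ (by stochasticity of the total transition), with a strictly positive right eigenvector $\alpha=(\alpha_1,\dots,\alpha_m)$ and a positive left eigenvector; all other eigenvalues lie strictly inside the unit disk. By analytic perturbation theory, for $\lambda$ near $0$ the matrix $\widehat p(\lambda)$ has a simple eigenvalue $\nu(\lambda)$ analytic in $\lambda$ with $\nu(0)=1$, and an expansion $\nu(\lambda)=\exp\!\big(i\sigma_1\lambda-\tfrac12\sigma^2\lambda^2+o(\lambda^2)\big)$; the spectral projection $\Pi(\lambda)$ onto the corresponding eigenline is analytic with $\Pi(0)=\alpha\otimes(\text{left eigenvector})$, and the complementary part of $\widehat p(\lambda)^n$ decays geometrically uniformly on compact $\lambda$-sets.

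Then I would write $\widehat p(\lambda)^n=\nu(\lambda)^n\,\Pi(\lambda)+R(\lambda)^n$ with $\|R(\lambda)^n\|\le C\theta^n$, $\theta<1$, substitute into the inversion formula $P_k(n)=\frac1{2\pi}\int_{-\pi}^{\pi}\widehat p(\lambda)^n\widehat P(\lambda,0)e^{-ik\lambda}\,d\lambda$, localize the integral to a neighborhood of $\lambda=0$ (the remainder and the away-from-zero contribution being exponentially small because $|\nu(\lambda)|<1$ off $\lambda=0$ by primitivity), rescale $\lambda=\eta/\sqrt n$, and apply Laplace's method: $\nu(\lambda)^n\to e^{i\sigma_1 n\lambda-\sigma^2 n\lambda^2/2}$ and $\Pi(\lambda)\widehat P(\lambda,0)\to \alpha$ times a scalar, producing precisely the Gaussian profile
\[
P_{k,j}(n)=\frac{\alpha_j}{2\sqrt{\pi D n}}\,e^{-\frac{(k-\xi_n)^2}{4Dn}}+o\!\Big(\tfrac1{\sqrt n}\Big),\qquad \xi_n=\sigma_1 n,\quad D=\tfrac12\sigma^2,
\]
after renormalizing $\alpha$ so that $\sum_j\alpha_j=1$. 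Finally, to get convergence in the sense of Definition~\ref{D:4} (uniform closeness of distribution functions) and for \emph{arbitrary} bounded initial density rather than the special $\mu_0$, I would invoke the geometric mixing of $\widehat p(\lambda)^n$ near $\lambda=0$: after one step any bounded-density initial measure is close (in total variation per cell, up to exponentially small corrections) to the piecewise-constant class, and the dependence on the initial data enters only through the scalar $\langle\text{left eigenvector},\widehat P(0,0)\rangle=1$, so the limit profile is universal. The main obstacle I anticipate is the analytic perturbation step: one must verify that the dominant eigenvalue of $\widehat p(\lambda)$ stays simple and isolated for small $\lambda$, extract the expansion coefficients $\sigma_1,\sigma^2$ and identify $\tfrac12\sigma^2$ with the $D$ of Theorem~\ref{Th:2} (generalizing~(\ref{eq:4.6}) to the subpartition setting via the linearity of $f$ on each $I_{k,j}$), and control the inversion integral uniformly in $k$ — this uniformity, together with the passage from densities to distribution functions, is where the real work lies, the rest being the standard Fourier/Perron--Frobenius machinery already used for Theorem~\ref{Th:1}.
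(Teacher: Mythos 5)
Your proposal follows essentially the same route as the paper's proof: the vector convolution equation for the cell densities, Fourier transform to $\widehat p(\lambda)^n$, isolation of the simple dominant Perron--Frobenius eigenvalue $z(\lambda)$ with positive eigenvector $\alpha(\lambda)$, and inverse Fourier transform yielding the Gaussian profile with $D=-\tfrac12 z''(0)$ and drift $\xi=iz'(0)$. The paper's argument is in fact only a sketch of this scheme, so your additional steps (analytic perturbation of the dominant eigenvalue, localization of the inversion integral, and the reduction of an arbitrary bounded initial density to the piecewise-constant class) supply detail the paper leaves implicit rather than a different method.
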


\begin{proof}
As already indicated, the vectors
$P_{k}(n)=\col(P_{k}(n,1),...,P_{k,m}(n))$ satisfy the equation

\begin{equation}\label{eq:4.8}
P_{k}(n+1)=\sum\,P_{j}P_{k-j}(n),
  \end{equation}
where the matrix $P_{j}$ is expressed via the translation matrices
in the considered LDS. Note that the matrix  $E=\sum_{j}\, P_{j}$
is equivalent to the stochastic irreducible matrix $DED^{-1},$
where is a $D$-diagonal matrix with  the lengths of subintervals
$I_{0,1}, ...,I_{0,m}$ on the diagonal. As a result of the Fourier
transformation, Eq.~(\ref{eq:4.8}) is transformed into the
difference equation
\begin{equation}\label{eq:4.9}
P(\lambda,n+1)=P(\lambda)P(\lambda,n),
  \end{equation}
where the matrix  $P(\lambda)=\sum\limits_{j}\,
P_{j}e^{ij\lambda}$ and the vector
$P(\lambda,n)=\sum\limits_{k}\,P_{k}(n)e^{ik \lambda}.$

The solution of Eq.~(\ref{eq:4.9}) is explicitly expressed via the
eigenvalues and eigenvectors (including the adjoined vectors in
the case of multiple eigenvalues) of the matrix $P(\lambda).$ If
$z(\lambda)$ is the maximum eigenvalue of the matrix $P(\lambda)$
(this eigenvalue is simple), then the solution of
Eq.~(\ref{eq:4.9}) as $n\rightarrow\infty$ can be represented in
the form
\begin{equation}\label{eq:4.10}
P(\lambda,n)=z^{n}(\lambda)\alpha(\lambda)+o(1),\,n\rightarrow\infty,
  \end{equation}
where  $\alpha(\lambda)$ is the eigenvector of the matrix
$P(\lambda)$ corresponding to the eigenvalue $z(\lambda).$ Note
that $z(0)=1$ and all components of the vector $\alpha(0)$ are
positive. If we perform the inverse Fourier transformation, then
we get relation (\ref{eq:4.7}) with
$D=-\frac{1}{2}\frac{\partial^{2}}{\partial
\lambda^{2}}z(\lambda)|_{\lambda=0},$\,\, $\xi=i\frac{\partial
z(\lambda)}{\partial \lambda}|_{\lambda=0}$ from (\ref{eq:4.10}).
\end{proof}

\begin{remark}
By using Theorem~\ref{Th:4.1}, one can deduce the explicit
``parametric''  dependence of  $D$ on $\Lambda$ for the linear
function $f(x)=\Lambda x$ specifying the LDS. In this case, the
role of parameters is played by the characteristics $(s,\vec{n})$
of the Markov subpartition $\{I_{k,j}\}$ consistent with the LDS,
that is by the parity of $m$ and the integer-valued vector
$\vec{n}.$ By using these parameters, we can explicitly construct
three polynomials $R_{\vec{n}}(x),$ \,$P_{\vec{n}}(x),$ and
$Q_{\vec{n}}(x)$ with integer-valued coefficients such that the
slope $\Lambda$ is the maximum root of the polynomial
$R_{\vec{n}}(x),$ i.e., $R_{\vec{n}}(x)=0$ (see
Proposition~\ref{p:3aa}) and $\displaystyle
D=\frac{P_{\vec{n}}(x)}{Q_{\vec{n}}(x)}.$
\end{remark}

\begin{example}
\end{example}

As an example, we consider the case of LDS with a linear function
$f(x)=\Lambda x,$\, where $\Lambda=2s$ is an even number. In this
case, the Markov subpartition $I_k$ is formed by two subintervals
$I_{k,+}=[k,k+\frac{1}{2})$ and $I_{k,-}=[k-\frac{1}{2},k).$ If
the first components of the vectors are referred to the intervals
$I_{k,+}$ and the second components are referred to the intervals
$I_{k,-},$ then $p_0=\displaystyle\frac{1}{\Lambda} \left(
\begin{array}{lr}
 1 & 0 \\
0 & 1
\end{array}
\right), $\, $p_j=\displaystyle\frac{1}{\Lambda} \left(
\begin{array}{lr}
 1 & 0 \\
1 & 0
\end{array}
\right), $\,\, $p_s=\left(
\begin{array}{lr}
 0 & 0 \\
1 & 0
\end{array}
\right), $\,\, $p_{-j}=\displaystyle\frac{1}{\Lambda} \left(
\begin{array}{lr}
 0 & 1 \\
0 & 1
\end{array}
\right), $\, and \,$p_{-s}=\displaystyle\frac{1}{\Lambda}\left(
\begin{array}{lr}
 0 & 1 \\
0 & 0
\end{array}
\right), $\,\,$j=1,2,...,s-1.$ In this case, the characteristic
function  $ P(\lambda,n)=\sum\limits_{k}\,P_{k}(n)e^{i\lambda k}$
is vector-valued and
$$
P(\lambda)=\sum_{j}\,p_je^{ij\lambda}=\frac{\sin\frac{\lambda
}{2}s}{2s\sin\frac{\lambda}{2}}\left(
\begin{array}{lr}
 e^{i\frac{\lambda}{2}(s-1)} & e^{-i\frac{\lambda}{2}(s+1)} \\
e^{i\frac{\lambda}{2}(s+1)} & e^{-i\frac{\lambda}{2}(s-1)}
\end{array}
\right).
$$
The determinant of the matrix $P(\lambda)$ is equal to zero and
the trace $\displaystyle \sp p(\lambda)=\frac{\sin\frac{\lambda
}{2}s}{s\sin\frac{\lambda}{2}}\cos \frac{\lambda}{2}(s-1).$ Hence,
the nontrivial eigenvalue $z(\lambda)$ of the matrix $P(\lambda)$
coincides with the trace of the matrix $P(\lambda).$ This yields
the following  well-known explicit expression \cite{CAM} for the
diffusion coefficients for even  $\Lambda=2s$:
\begin{equation}\label{eq:21}
D=-\frac{1}{2}\frac{\partial^{2}}{\partial\lambda^{2}} \sp
p(\lambda)|_{\lambda=0}=\frac{(\Lambda-1)(\Lambda-2)}{24}.
\end{equation}

As it follows from the relations of Example~\ref{Ex:5} and
(\ref{eq:21}) for the coefficient of deterministic diffusion in
the case of a linear function $f(x)=\Lambda x$ with integer
$\Lambda,$ $D(\Lambda)$ is a monotonic function of $\Lambda.$ As
$\Lambda$ increases, i.e., the degree of stretching of the map
$f(x)$ increases,  the coefficient of deterministic diffusion
$D(\Lambda)$ increases. However, if we compare the relations for
even and odd $\Lambda,$ then, e.g., we get $D_3=\frac{1}{3}$ for
$\Lambda=3$ and  $D_4=\frac{1}{4}$ for  $\Lambda=4.$ Thus, at
first sight, these results seem to be intuitively strange.

We consider this problem in more detail. Assume that the initial
probability measure is uniformly distributed over the interval $
[\frac{3}{2},2),$ that is its density is constant and equal to 2. As
a result of the one-time action of the LDS, we get a measure with
constant density in the interval $ [\frac{1}{2},2)$ for the
mapping with $\Lambda=3$  and, in the interval $[0,2)$ for the
mapping with $\Lambda=4.$ Clearly, the variance of a measure with
constant density in the interval $[0,2)$ is greater than the
variance of a measure with constant density in the interval $
[\frac{1}{2},2),$ which agrees with our assumption that the higher
the degree of stretching, the greater the variance. A similar
picture is observed in the case where the initial measure has a
constant density in the interval $ [-2, -\frac{3}{2}).$
 For $\Lambda=3,$ the mapping realized by the LDS leads to a measure with constant density in the interval $[-2,-\frac{1}{2})$ and the mapping with $\Lambda=4$ yields a measure with
constant density in the interval $ [-2,0).$

However, if the initial probability measure has a constant density
in the union of intervals $ [-2, -\frac{3}{2})\bigcup
[\frac{3}{2},2),$ then, in view of the linearity of the
Perron--Frobenius operator, for $\Lambda=3,$ we get a measure in
the union of intervals $A_3=[-2, -\frac{1}{2})\bigcup
[\frac{1}{2},2],$ whereas for the mapping with  $\Lambda=4,$ the
measure has a constant density in the interval $A_4=[-2,2].$
Clearly, the variance of the probability measure on $A_3$ is
greater than the variance on $A_4.$ Thus, the higher degree of
stretching not always corresponds to a greater variance.
\textbf{If stretching is directed toward the mean value, then the
variance decreases.}

The relations for the values of $D$ for even and odd $\Lambda$ can
be replaced by a single relation valid for all  $\Lambda$ by
introducing a function $\omega(\Lambda),$ namely,
\begin{equation}\label{eq:4.11}
D(\Lambda)=\frac{1}{24}(\Lambda-1)(\Lambda-\omega(\Lambda)),
\end{equation}
where the function $\omega(\Lambda)$ is equal to 2 for even
$\Lambda$ and to --1 for odd $\Lambda.$ For any $\Lambda,$ the
function $\omega(\Lambda)$ is characterized by a complex fractal
dependence on $\Lambda$ caused by the fractal dependence of $D$ on
$\Lambda$ (see \cite{CAM, Kl}). The function  $\omega(\Lambda)$
can be approximately regarded as a 2-periodic function such that
$\omega(\Lambda)=2-3|\Lambda-4|$ for $\Lambda \in [3,5].$ The plot
of this function is depicted in Fig.~1 and gives the first
approximation of the behavior of $D$ as a function of $\Lambda.$

\begin{figure}[h]
\centering
\includegraphics[width=0.8\textwidth]{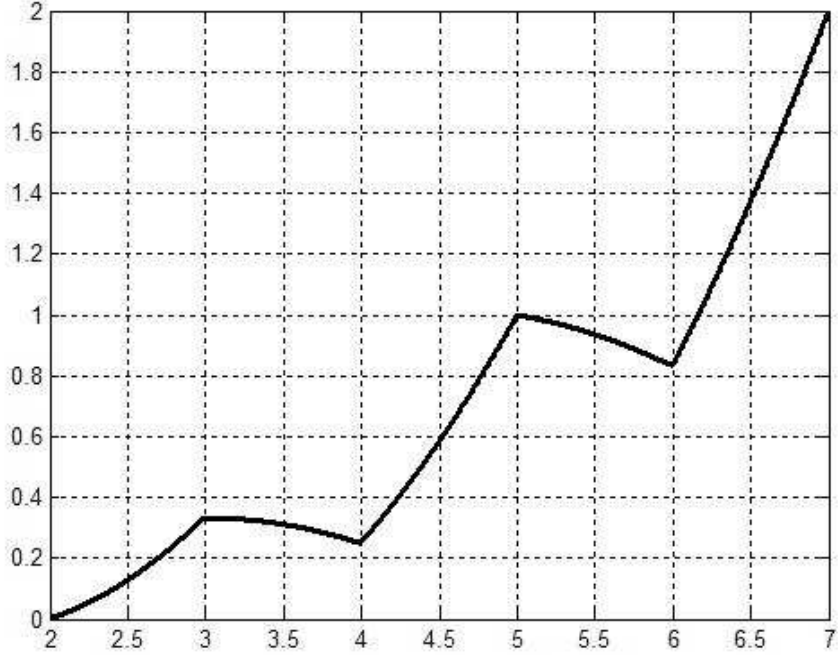}
\caption { Graph of  the function $D(\Lambda)$.}
\end{figure}

Let us consider some examples of a linear function $f(x)=\Lambda
x,$ that illustrated the results of theorem~\ref{Th:4.1}.
\begin{example}
1. Markov subpartition of $m=3$ subintervals:
$I_{k,1}=[k-\frac{1}{2},k-\xi),$\,$I_{k,2}=[k-\xi,k+\xi),$\,$I_{k,3}=[k+\xi,k+\frac{1}{2}),$\,$k\in Z.$\\
2. A system of two equations for $\xi$ and  the value of slope $\Lambda:$\\
$$
\Lambda\xi=\frac{1}{2}, \qquad  \Lambda\frac{1}{2}=2-\xi.
$$
3. An equation for $\Lambda:$\qquad $\Lambda^{2}-4\Lambda+1=0.$\\
4. A value of $\Lambda:$\,\,$\Lambda=2+\sqrt{3}\approx 3.73. $\\
5. A value of $\xi:$\,\,$\xi=\frac{2-\sqrt{3}}{2}\approx 0.13. $\\
6. A matrix $P(\lambda):$\\
$$P(\lambda)=\displaystyle \frac{1}{\Lambda}\left(
\begin{array}{ccc}
e^{-i\lambda}  & 1 & e^{i\lambda} +e^{2i\lambda}\\
e^{-i\lambda} & 1 &e^{i\lambda}\\
e^{-i\lambda} +e^{-2i\lambda} & 1 & e^{i\lambda}
\end{array}
\right).
$$
7. A value of greatest eigenvalue $z(\lambda)$  of matrix
$P(\lambda):$\\
$z(\lambda)=\frac{1}{\lambda}(1+\cos\lambda +\sqrt{\cos^{2}\lambda+2\cos\lambda}).$\\
8.  A value of deterministic diffusion coefficient:\,
$D=\frac{\sqrt{3}}{6}\approx 0.29.$\\
9.  Values of densities $\alpha_j$ on subintervals $I_{k,j}:$\,
$\alpha_1=\alpha_3=\frac{2\sqrt{3}+3}{6}\approx 1.08,$\,\,
$\alpha_2=\frac{3+\sqrt{3}}{6}\approx 0.79.$
\end{example}
\begin{example}
1. Markov subpartition of $m=3$ subintervals:\\
$I_{k,1}=[k-\frac{1}{2},k-\xi),$\,$I_{k,2}=[k-\xi,k+\xi),$\,$I_{k,3}=[k+\xi,k+\frac{1}{2}),$\,$k\in Z.$\\
2. A system of two equations for $\xi$ and  the value of slope $\Lambda:$\\
$$\Lambda\xi=\frac{3}{2},\,\,\,\qquad \Lambda\frac{1}{2}=3-\xi.
$$
3. An equation for $\Lambda:$\,\, $\Lambda^{2}-6\Lambda+3=0.$\\
4. A value of $\Lambda:$\,\,$\Lambda=3+\sqrt{6}\approx 5.45.$\\
 5. A value of $\xi:$\,\,$\xi=\frac{3-\sqrt{6}}{2}\approx 0.28.$\\
 6.  A matrix $P(\lambda):$\\ $$P(\lambda)=\displaystyle \frac{1}{\Lambda}\left(
\begin{array}{ccc}
e^{-2i\lambda}  & 1 + e^{i\lambda} +e^{-i\lambda} & e^{2i\lambda} +e^{3i\lambda}\\
e^{-2i\lambda} & 1 + e^{i\lambda} +e^{-i\lambda} & e^{2i\lambda}\\
e^{-2i\lambda} +e^{-3i\lambda} & 1+ e^{i\lambda} +e^{-i\lambda} &
e^{2i\lambda}
\end{array}
\right).
$$
7. An equation for greatest eigenvalue $z(\lambda)$ of matrix $P(\lambda):$\\
$\Lambda^{3}z^{3} -\Lambda^{2}z^{2}(1+2\cos\lambda +2\cos2\lambda)-\Lambda z (1+2\cos\lambda)+1+2\cos\lambda=0.$\\
8. A value of deterministic diffusion coefficient:\\
$ \displaystyle D=\frac{31 \Lambda -16}{36\Lambda -24}\approx 0.89.$ \\
9.  Values of densities $\alpha_j$ on subintervals $I_{k,j}:$\,
$\alpha_1=\alpha_3=\frac{\sqrt{6}+2}{2}\approx 2.22,$\,\,
$\alpha_2=\frac{3+\sqrt{6}}{6}\approx 0.9.$
\end{example}
\begin{example}
1. Markov subpartition of $m=3$ subintervals:\\
$I_{k,1}=[k-\frac{1}{2},k-\xi),$\,$I_{k,2}=[k-\xi,k+\xi),$\,$I_{k,3}=[k+\xi,k+\frac{1}{2}),$\,$k\in
Z.$\\  2. A system of two equations for $\xi$ and  the value of slope $\Lambda:$\\
$$\Lambda\xi=\frac{3}{2},\,\,\,\qquad \Lambda\frac{1}{2}=2+\xi.
$$
3.  An equation for $\Lambda:$\,\, $\Lambda^{2}-4\Lambda-3=0.$\\
4. A value of $\Lambda:$\,\, $\Lambda=2+\sqrt{7}\approx 4.65.$\\
 5. A value of  $\xi:$\,\, $\xi=\frac{\sqrt{7}-2}{2}\approx 0.32.$\\
 6.  A matrix $P(\lambda):$\\$$P(\lambda)=\displaystyle \frac{1}{\Lambda}\left(
\begin{array}{ccc}
0  & 1 + e^{i\lambda} +e^{-i\lambda} & e^{2i\lambda} \\
e^{-2i\lambda} & 1 + e^{i\lambda} +e^{-i\lambda} & e^{2i\lambda}\\
e^{-2i\lambda}  & 1+ e^{i\lambda} +e^{-i\lambda} & 0
\end{array}
\right).
$$
7. An equation for greatest eigenvalue $z(\lambda)$ of matrix $P(\lambda):$\\
$-\Lambda^{3}z^{3} +\Lambda^{2}z^{2}(1+2\cos\lambda )+\Lambda z (1+2\cos2\lambda(1+2\cos\lambda))+1+2\cos\lambda=0.$\\
8. A value of deterministic diffusion coefficient:\\
$\displaystyle D=\frac{9 \Lambda +2}{13\Lambda -9}=\frac{479+107\sqrt{7}}{894}\approx 0.85.$ \\
9.  Values of densities $\alpha_j$ on subintervals $I_{k,j}:$\,
$\alpha_1=\alpha_3=\frac{1}{2}+\frac{1}{\sqrt{7}},$\,\,
$\alpha_2=\frac{1}{2}+\frac{1}{2\sqrt{7}}.$
\end{example}
\begin{example}
1. Markov subpartition of $m=4$ subintervals:\\
$I_{k,1}=[k-\frac{1}{2},k-\xi),$\,$I_{k,2}=[k-\xi,k),$\,$I_{k,3}=[k,k+\xi),$\,$I_{k,4}=[k+\xi,k+\frac{1}{2}),$\,$k\in Z.$ \\
2. A system of two equations for $\xi$ and  the value of slope $\Lambda:$\\
$$\Lambda\xi=1,\,\,\,\qquad \Lambda\frac{1}{2}=1+\xi.
$$
3.  An equation for $\Lambda:$\,\,\qquad  $\Lambda^{2}-2\Lambda-2=0.$\\
4. A value of $\Lambda:$\,\, $\Lambda=1+\sqrt{3}\approx 2.73.$\\
 5. A value of  $\xi:$\,\,$\xi=\frac{\sqrt{3}-1}{2}\approx 0.37.$\\
 6.  A matrix $P(\lambda):$\\$$P(\lambda)=\displaystyle \frac{1}{\Lambda}\left(
\begin{array}{cccc}
0  & 1 & e^{i\lambda} & 0 \\
e^{-i\lambda} & 1 & e^{i\lambda} & 0\\
0  &  e^{-i\lambda} &1&e^{i\lambda}\\
0  &  e^{-i\lambda} &1&0
\end{array}
\right).
$$
7.  A value of greatest eigenvalue $z(\lambda)$  of matrix
$P(\lambda):$\\
$  z(\lambda)=\frac{1}{\Lambda}(1+\sqrt{1+2\cos \lambda}).$\\
8. A value of deterministic diffusion coefficient:\,
$D=\frac{3-\sqrt{3}}{12}\approx 0.11.$ \\
9.  Values of densities $\alpha_j$ on subintervals $I_{k,j}:$\,
\,$\alpha_1=\alpha_4=\frac{3+\sqrt{3}}{6}\approx 0.79,$\,\,
$\alpha_2=\alpha_{3}=\frac{3+2\sqrt{3}}{6}\approx 1.08.$
\end{example}
\begin{example}
1. Markov subpartition of $m=4$ subintervals:\\
$I_{k,1}=[k-\frac{1}{2},k-\xi),$\,$I_{k,2}=[k-\xi,k),$\,$I_{k,3}=[k,k+\xi),$\,$I_{k,4}=[k+\xi,k+\frac{1}{2}),$\,$k\in Z.$ \\
 2. A system of two equations for $\xi$ and  the value of slope $\Lambda:$\\
$$\Lambda\xi=1,\,\,\,\qquad \Lambda\frac{1}{2}=2-\xi.
$$
3.  An equation for $\Lambda:$\,\,\qquad  $\Lambda^{2}-4\Lambda+2=0.$\\
4. A value of $\Lambda:$\,\, $\Lambda=2+\sqrt{2}\approx 3.41.$\\
 5. A value of $\xi:$\,\,$\xi=\frac{2-\sqrt{2}}{2}\approx 0.29.$\\
 6.  A matrix $P(\lambda):$\\$$P(\lambda)=\displaystyle \frac{1}{\Lambda}\left(
\begin{array}{cccc}
e^{-i\lambda}   & 1 & e^{i\lambda} & e^{2i\lambda}  \\
e^{-i\lambda} & 1 & e^{i\lambda} & 0\\
0  &  e^{-i\lambda} &1&e^{i\lambda}\\
e^{-i\lambda}   &  e^{-i\lambda} &1&e^{i\lambda}
\end{array}
\right).
$$
7. A value of  greatest eigenvalue $z(\lambda)$ of matrix $P(\lambda):$\\
$  z(\lambda)=\frac{1}{\Lambda}(1+\cos \lambda+\sqrt{2-\sin^{2}\lambda}).$\\
8. A value of deterministic diffusion coefficient:
\,$D=\frac{1}{4}.$ \\
9.  Values of densities $\alpha_j$ on subintervals $I_{k,j}:$\,
$\alpha_1=\alpha_4=\frac{\sqrt{2}-1}{2}\approx 0.21,$\,\,
$\alpha_2=\alpha_{3}=\frac{2-\sqrt{2}}{2}\approx 0.29.$
\end{example}
\begin{example}
1. Markov subpartition of $m=3$ subintervals:\\
$I_{k,1}=[k-\frac{1}{2},k-\xi_{2}),$\,$I_{k,2}=[k-\xi_{2},k-\xi_{1}),$\,$I_{k,3}=[k-\xi_{1},k+\xi_{1}),$\,$I_{k,4}=[k+\xi_{1},k+\xi_{2}),$\,$I_{k,5}=[k+\xi_{2},k+\frac{1}{2}),$\,$k\in Z.$ \\
2. A system of  equations for $\xi$ and  the value of slope $\Lambda:$\\
$$\Lambda\xi_{1}=\frac{3}{2},\,\,\,\qquad \Lambda\xi_{2}=2-\xi_{1},\,\,\,\qquad \Lambda\frac{1}{2}=2+\xi_{2}.
$$
3.  An equation for $\Lambda:$\,\, \qquad  $\Lambda^{3}-4\Lambda^{2}-4\Lambda+3=0.$\\
4. A value of $\Lambda:$\,\,$\Lambda\approx 4.75.$\\
 5.  A value of  $\xi:$\,\,$\xi_{1}=\frac{3}{2\Lambda},\,\,\,\xi_{2}=\frac{4\Lambda-3}{2\Lambda^{2}}.$\\
 6.  A matrix $P(\lambda):$\\ $$P(\lambda)=\displaystyle \frac{1}{\Lambda}\left(
\begin{array}{ccccc}
0  & 0&             1 + e^{i\lambda} + e^{-i\lambda}& e^{2i\lambda}& 0 \\
e^{-2i\lambda} &0 & 1 + e^{i\lambda}+ e^{-i\lambda}& e^{2i\lambda}  & 0\\
e^{-2i\lambda} &0 & 1 + e^{i\lambda}+ e^{-i\lambda} & 0 & e^{2i\lambda} \\
0 &      e^{-2i\lambda} & 1 + e^{i\lambda}+ e^{-i\lambda} & 0 & e^{2i\lambda} \\
0 &      e^{-2i\lambda} & 1 + e^{i\lambda}+ e^{-i\lambda} &0 & 0
\end{array}
\right).
$$
7. An equation for greatest eigenvalue $z(\lambda)$ of matrix $P(\lambda):$\\
$\Lambda^{5}z^{5}-\Lambda^{4}z^{4}(1+2\cos \lambda)-\Lambda^{3}z^{3}(1+2\cos \lambda(1+2\cos \lambda))-\Lambda^{2}z^{2}(1+2\cos \lambda+2\cos 2\lambda)-z+1+2\cos \lambda=0.$\\
8. A value of deterministic diffusion coefficient:
\,$\displaystyle D=\frac{81\Lambda^{2}+69\Lambda-55}{131\Lambda^{2}+99\Lambda-93}\approx 0.64.$ \\
9.  Values of densities $\alpha_j$ on subintervals $I_{k,j}:$\,
$\alpha_1=\alpha_5=\frac{\Lambda^{2}+4\Lambda-3}{4\Lambda(9-\Lambda)}
,$\,\,
$\alpha_2=\alpha_{4}=\frac{\Lambda^{2}+4\Lambda-3}{16(9-\Lambda)},$\,\,
$\alpha_3=\frac{5\Lambda^{2}-11\Lambda+6}{16(9-\Lambda)}.$
\end{example}
\begin{example}
1. Markov subpartition of $m=6$ subintervals:\\
$I_{k,1}=[k-\frac{1}{2},k-\xi_{2}),$\,$I_{k,2}=[k-\xi_{2},k-\xi_{1}),$\,$I_{k,3}=[k-\xi_{1},k),$\,$I_{k,4}=[k,k+\xi_{1}),$\,$I_{k,5}=[k+\xi_{1},k+\xi_{2}),$\,$I_{k,6}=[k+\xi_{2},k+\frac{1}{2}),$\,$k\in Z.$\\
2. A system of equations for $\xi$ and  the value of slope $\Lambda:$\\
$$\Lambda\xi_{1}=\xi_{2},\,\,\,\qquad \Lambda\xi_{2}=2-\xi_{1},\,\,\,\qquad \Lambda\frac{1}{2}=2+\xi_{1}.
$$
3.  An equation for $\Lambda:$\,\,\qquad  $\Lambda^{3}-4\Lambda^{2}+\Lambda-8=0.$\\
4. A value $\Lambda:$\,\,$\Lambda\approx 4.22.$\\
 5.  A value $\xi:$\,\,$\xi_{1}=\frac{2}{\Lambda^{2}+1},\,\,\,\xi_{2}=\frac{2\Lambda}{\Lambda^{2}+1}.$\\
 6.  A matrix $P(\lambda):$\\$$P(\lambda)=\displaystyle \frac{1}{\Lambda}\left(
\begin{array}{cccccc}
0  &   1 + e^{-i\lambda} & 0 &   0 &    e^{i\lambda} + e^{2i\lambda} & 0 \\
0 &  e^{-i\lambda} & 1 &0& e^{i\lambda}+ e^{2i\lambda} & 0\\
e^{-2i\lambda} &  e^{-i\lambda}&1 &0 &  e^{i\lambda} & e^{2i\lambda} \\
e^{-2i\lambda} &  e^{-i\lambda}& 0 & 1 &  e^{i\lambda} & e^{2i\lambda} \\
0 &  e^{-i\lambda}+ e^{-2i\lambda}&0 & 1 &e^{i\lambda} & 0 \\
0 &  e^{-i\lambda}+ e^{-2i\lambda}&0 & 0 &1+e^{i\lambda} & 0
\end{array}
\right).
$$
7. An equation for greatest eigenvalue $z(\lambda)$ of matrix $P(\lambda):$\\
$ -\Lambda^{4}z^{4}+\Lambda^{3}z^{3}(1+2\cos \lambda)+\Lambda^{2}z^{2}(1+2\cos \lambda)+z(1+2\cos \lambda+2\cos 2\lambda+2\cos 3\lambda)+2+2\cos 2 \lambda+4\cos \lambda\cos 2 \lambda=0.$\\
8. A value of deterministic diffusion coefficient:\,
$\displaystyle D=\frac{5\Lambda^{2}+26\Lambda+22}{18\Lambda^{2}+18\Lambda+56}\approx 0.49.$ \\
9.  Values of densities $\alpha_j$ on subintervals $I_{k,j}:$\,
$\alpha_1=\alpha_6=\frac{\Lambda^{2}+1}{3\Lambda^{2}-8\Lambda+1},$\,\,
$\alpha_2=\alpha_{5}=\frac{\Lambda^{2}+2}{3\Lambda^{2}-8\Lambda+1},$\,\,
$\alpha_3=\alpha_4=\frac{\Lambda^{2}+\Lambda+5}{3\Lambda^{2}-8\Lambda+1}.$
\end{example}
\begin{example}
1. Markov subpartition of $m=7$ subintervals:\\
$I_{k,1}=[k-\frac{1}{2},k-\xi_{3}),$\,$I_{k,2}=[k-\xi_{3},k-\xi_{2}),$\,
$I_{k,3}=[k-\xi_{2},k-\xi_{1}),$\,$I_{k,4}=[k-\xi_{1},k+\xi_{1}),$\,$I_{k,5}=[k+\xi_{1},k+\xi_{2}),$\,$I_{k,6}=[k+\xi_{2},k+\xi_{3})$,\,$I_{k,7}=[k+\xi_{3},k+\frac{1}{2})$\,$k\in Z.$ \\
2. A system of  equations for $\xi$ and  the value of slope $\Lambda:$\\
$$\Lambda\xi_{1}=\xi_{2},\qquad \Lambda\xi_{2}=\xi_{3},\qquad \Lambda\xi_{3}=\frac{1}{2},\qquad \Lambda\frac{1}{2}=2-\xi_{1}.
$$
3. An equation for $\Lambda:$\,\,\qquad $\Lambda^{4}-4\Lambda^{3}+1=0.$\\
4. A value of $\Lambda:$\,\,$\Lambda\approx 3.968.$\\
 5. A value of $\xi:$\,\,$\xi_{1}=\frac{1}{2\Lambda^{3}},\,\,\,\xi_{2}=\frac{1}{2\Lambda^{2}},\,\,\,\xi_{3}=\frac{1}{2\Lambda}.$\\
 6.  A matrix $P(\lambda):$\\
 $$P(\lambda)=\displaystyle \frac{1}{\Lambda}\left(
\begin{array}{ccccccc}
e^{-i\lambda} &  1 & 0 &   0 & 0 &   0 &    e^{i\lambda} + e^{2i\lambda} \\
e^{-i\lambda} &  0 & 1 &   0 & 0 &   0 &    e^{i\lambda} + e^{2i\lambda} \\
e^{-i\lambda} &  0 & 0 &   1 & 0 &   0 &    e^{i\lambda} + e^{2i\lambda} \\
e^{-i\lambda} &  0 & 0 &   1 & 0 &   0 &    e^{i\lambda}  \\
e^{-i\lambda}+ e^{-2i\lambda}  &  0 & 0 &   1 & 0 &   0 &    e^{i\lambda}  \\
e^{-i\lambda}+ e^{-2i\lambda}  &  0 & 0 &   0 & 1 &   0 &    e^{i\lambda}  \\
e^{-i\lambda}+ e^{-2i\lambda}  &  0 & 0 & 0   & 0 &   1 &
e^{i\lambda}
\end{array}
\right).
$$
7. An equation for greatest eigenvalue $z(\lambda)$ of matrix $P(\lambda):$\\
$ \Lambda^{4}z^{4}-\Lambda^{3}z^{3}(2+2\cos \lambda)+ 1=0.$\\
8. A value of deterministic diffusion coefficient:\,
$D=\frac{1}{4(\Lambda-3)}\approx 0.26.$ \\
9.  Values of densities $\alpha_j$ on subintervals $I_{k,j}:$\,
$\alpha_1=\alpha_7=\frac{\Lambda}{4(\Lambda-3)} ,$\,\,
$\alpha_2=\alpha_{6}=\frac{\Lambda}{4},$\,\,
$\alpha_3=\alpha_5=\frac{(\Lambda^{2}-3\Lambda-3)\Lambda}{4(\Lambda-3)},$\,\,
$\alpha_4=\frac{\Lambda^{2}}{4}-\frac{3}{4}-\frac{5}{2(\Lambda-3)}.$
\end{example}

The results of theorem~\ref{Th:4.1} and above-considered examples
allow us to give the following definition of deterministic
diffusion of LDS~(\ref{eq:1})-(\ref{eq:3}).

\begin{definition}\label{D:6}
We say that the LDS (\ref{eq:1})--(\ref{eq:3})  has a
deterministic diffusion if, for any initial probability measure
$\mu_{0}$  with bounded density, there exist a sequence of numbers
$\sigma_{n}^{2}> 0$ and  $\xi_{n}$ and 1--periodic function
$\alpha(x)\geq 0,$\,$\int\limits_{-1/2}^{1/2}\,\alpha(x)\,dx=1,$
such that the sequence of measures $\mu_{n}=F^{n} \mu_{0}$,
obtained from the initial measure by the $n$-fold action of the
 LDS, is asymptotically equivalent, as  $n\rightarrow\infty$  to  a sequence of measures
with densities
$$\rho_n(x)=\frac{\alpha(x)}{\sigma_n\sqrt{2\pi}}e^{-\frac{(x-\xi_n)^2}{2\sigma_n^2}}.$$
\end{definition}
\begin{remark}
The LDS (\ref{eq:1})--(\ref{eq:3}) on the entire axis generates
the associated DS
 $x_{n+1}=\hat{f}(x_n)$ on the finite segment
$[-\frac{1}{2},\frac{1}{2}),$ where the value of the function
$\hat{f}(x_n)$ is equal to the fractional part of $f(x),$ i.e.,
$\hat{f}(x_n)=\{f(x)),$ and the function $\hat{f}$ maps the
segment $[-\frac{1}{2},\frac{1}{2})$ into itself. We denote this
DS, which is a compactification of the LDS, by CLDS. The function
$\alpha(x)\geq 0$ in Definition~\ref{D:6} is the density of the
invariant probability measure for the CLDS.
\end{remark}


\section{Deterministic Diffusion in the  Case of Transportation in a Billiard Channel}\label{Sec:5}

The results of numerical experiments carried out in \cite{ARV}
demonstrate that the deterministic diffusion in long billiard
channels is anomalous. There are different theoretical models of
this type of diffusion transport in long channels. One of these
models can be found in \cite{AGNN}. The essence of this model can
be described as follows:

Consider a long billiard channel with boundaries in the form of
periodically repeated arcs slightly distorting the straight lines
of boundaries. Assume that the upper boundary of the channel is
symmetric to the lower boundary. Then the trajectory of motion of
a billiard ball regarded as a material point obeying the ideal law
of reflections from both boundaries can be studied in a channel of
half width with symmetric reflections of the trajectory in the
upper part of the channel into its lower part relative to the
straight middle line of the channel. Hence, the reflections from
this middle line can be regarded as ideal.

We approximately assume that the reflection from the lower part of
the distorted boundary is realized in its linear approximation.
However, the normal $\nu$ to the surface of actual reflection does
not coincide with normal to the rectilinear approximation of the
channel and is described by a known function periodic along the
axis of the channel. Assume that $x_{n-1},$ \,$x_{n},$ and
$x_{n+1}$ are the abscissas of points of three consecutive
reflections of the billiard ball and that the vector $\nu(x_{n})$
makes an angle $\alpha(x_{n})$ with the normal to the surface (see
Fig.~2). Then the analysis of the ideal reflection at the point
$x_{n}$ leads to the equality of the angle of incidence
$\varphi_{n}$ and the angle of reflection $\psi_{n}$ relative to
the vector $\nu.$ It is clear that
$\frac{x_{n}-x_{n-1}}{h}=\tan(\varphi-\alpha)$ and
$\frac{x_{n+1}-x_{n}}{h}=\tan(\psi+\alpha).$ Hence,
\begin{equation}\label{eq:16}
\frac{x_{n+1}-x_{n}}{h}=\frac{\frac{x_{n+1}-x_{n}}{h}+\tan(2\alpha)}{1-\tan(2\alpha)\cdot\frac{x_{n+1}-x_{n}}{h}}.
\end{equation}

For large $h,$ we get the following approximate relation from
(\ref{eq:16}):
$$
x_{n+1}-x_{n}=x_{n}-x_{n-1}+h\tan(2\alpha (x_{n})).
$$
\begin{figure}[h]
\centering
\begin{tikzpicture}[>=latex,scale=1.5]
\draw[dash pattern=on 0.7cm off 0.3cm] (0,2) -- (8,2);
\draw[name path=low-line] (0,0) -- (8,0);
\draw [<->] (0.5,0) -- (0.5,2) node[left,midway]{$\frac{h}{2}$};
\draw[double] ($(2.8,0)!1.2cm!(5,2)$) arc (42:77:1.2) node[right=0.5cm]{{\small $\psi$}};
\draw[double] ($(2.8,0)!1.3cm!(2.1,2)$) arc (109.5:77:1.3); 
\draw (2.7,1.5) node{{\small $\varphi$}};
\draw ($(2.8,0)!1cm!(2.8,2)$) arc (90:77:1); 
\draw (2.9,1.1) node{{\small $\alpha$}};
\draw[decoration={markings, mark=at position 0.7 with {\arrow{latex}}},postaction={decorate}] (0.8,0.4) -- (1.5,0) 
node[below] {{\small $x_{n-1}$}};
\draw[decoration={markings, mark=at position 0.75 with {\arrow{latex}}},postaction={decorate}] (1.5,0) -- (2.1,2);
\draw[decoration={markings, mark=at position 0.3 with {\arrow{latex}}},postaction={decorate}] (2.1,2) -- (2.8,0)
node[below] {{\small $x_{n}$}};
\draw[decoration={markings, mark=at position 0.75 with {\arrow{latex}}},postaction={decorate}] (2.8,0) -- (5,2);
\draw[decoration={markings, mark=at position 0.3 with {\arrow{latex}}},postaction={decorate}] (5,2) -- (7.2,0)
node[below] {{\small $x_{n+1}$}};
\draw[decoration={markings, mark=at position 0.75 with {\arrow{latex}}},postaction={decorate}] (7.2,0) -- (7.4,1);

\draw (0,0) .. controls (0.5,-0.25) and (1,0.23) .. (1.2,0); 
\draw (1.2,0) .. controls (1.4,-0.23) and (1.7,0.31) .. (2,0);
\draw (2,0) .. controls (2.1,-0.1) and (2.2,-0.15) .. (2.3,0);
\draw (2.3,0) .. controls (2.5,0.3) and (2.6,0.1) .. (2.8,0);
\draw (2.8,0) .. controls (3,-0.1) and (3.1,-0.2) .. (3.3,0);
\draw (3.3,0) .. controls (3.6,0.3) and (4.2,-0.3) .. (5,0);
\draw (5,0) .. controls (6,0.25) and (6.5,-0.2) .. (7.2,0);
\draw (7.2,0) .. controls (7.7,0.23) .. (8,0);

\draw[dash pattern=on 0.3cm off 0.3cm] (2.1,2) -- (2.1,0);
\draw (2.8,2) -- (2.8,0);
\draw[name path=nu,->] (2.8,0) --+ (77:1.6cm)node[anchor=west]{$\nu$};
\end{tikzpicture}
\caption{ Reflections of the billiard ball in a channel of
half width.}
\end{figure}
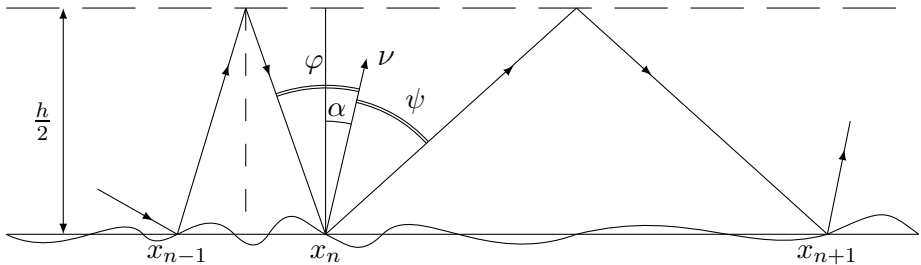

Let the function $f(x)=\tan(2\alpha x)$ be periodic in $x$ with
period 1. Then we arrive at the following model of a trajectory in
the billiard channel:
\begin{equation}\label{eq:17}
x_{n+1}-x_{n}=x_{n}-x_{n-1}+f(x_{n}).
\end{equation}
The dynamical system (\ref{eq:17}) is a two-dimensional dynamical
system discrete in time which generalizes the dynamical system
(\ref{eq:1}). The initial conditions for $x_{0}$ and $x_{1}$ are
assumed to be given. Let $x_{0}=0$ and let $x_{1}\in
I_{k}=[k-\frac{1}{2},k+\frac{1}{2}).$ Equation~(\ref{eq:17}) can
be represented in the equivalent form as follows:
\begin{equation}\label{eq:18}
x_{n+1}=x_{1}\cdot n + \sum\limits_{k=1}^{n}\,(n+1-k)f(x_{k}).
\end{equation}

Assume that the function $f(x)=\Lambda\cdot \{x)$ in
Eq.~(\ref{eq:18}) is linear with respect to the fractional part of
the argument. Then, for $\Lambda> 1$ and $x_{1}$ uniformly
distributed over the interval $I_{k},$  the terms in (\ref{eq:18})
can be regarded uniformly distributed independent random
variables. This leads to the normal distribution of $x_{n+1}$ with
variance equal to the sum of variances of all terms in
(\ref{eq:18}).

Hence, the variance of the distribution of $x_{n+1}$ is equal to
$\displaystyle \sigma^{2}=
\frac{1}{12}n^{2}+\frac{\Lambda^{2}}{12} \frac{n(n+1)(2n+1)}{6}$
and nonlinearly depends on $n\rightarrow \infty.$ The
deterministic diffusion in this billiard strip is anomalous. Thus,
there are two factors leading to the anomaly of the deterministic
diffusion, namely, the quadratic dependence of the variance on
distribution of the initial values even for the ideal billiard and
the growth of coefficients of the terms in sum (\ref{eq:18})
leading to the cubic dependence of the variance of $x_{n+1}$ on
$n.$

We now make an important remark. We have studied the distribution
of positions of the billiard ball after $n\rightarrow \infty$
reflections. From the physical point of view, it is more important
to get the distribution of the abscissas of billiard balls for
large values of time $t$ because, for the the same period of time,
the billiard ball makes different numbers of reflections for
different trajectories.

In the ordinary billiards, the time between two successive
collisions is proportional to the covered distance. However, in
the model of bouncing ball \cite{MK} over an irregular surface,
the time between two consecutive collisions is maximal if the
points of consecutive reflections coincide. The problem of
investigation of the anomalous deterministic diffusion in billiard
channels as $t\rightarrow\infty$ is of significant independent
interest.

Note that the Gaussian density in the case of ordinary diffusion
is the Green function of the Cauchy problem for the partial
differential equation
$$
\frac{\partial u}{\partial t}-D\Delta u=0.
$$

In the investigation of anomalous diffusion, we consider a
differential equation with fractional derivatives. Equations of
this kind are studied in numerous works (see \cite{K} and the
references therein).
\bigskip

\noindent  \textbf{Acknowledgments}

The authors express their gratitude to Prof.~A.~Katok for a
remarkable course of lectures on the contemporary theory of
dynamical systems delivered in May, 2014 in Kiev and for the
informal discussions which stimulated the authors to write this
paper.

\end{document}